\documentclass[11pt]{amsart}

\usepackage[T1]{fontenc}
\usepackage[utf8]{inputenc}
\usepackage{amsmath,amssymb,amsthm,mathrsfs}
\usepackage{enumitem}
\usepackage[a4paper,margin=0.9in]{geometry}
\usepackage[colorlinks=true,linkcolor=blue,citecolor=blue,urlcolor=blue]{hyperref}

\numberwithin{equation}{section}

\newtheorem{theorem}{Theorem}[section]
\newtheorem{lemma}[theorem]{Lemma}
\newtheorem{proposition}[theorem]{Proposition}

\newtheorem{assumption}{Assumption}[section]
\theoremstyle{definition}
\newtheorem{definition}{Definition}[section]
\theoremstyle{remark}
\newtheorem{remark}{Remark}[section]

\newcommand{\R}{\mathbb R}
\newcommand{\E}{\mathbb E}
\newcommand{\PP}{\mathbb P}
\newcommand{\cF}{\mathcal F}

\newcommand{\elltwo}{\ell_2}
\newcommand{\md}{\mathop{}\!\mathrm{d}}

\title[Dirichlet problem for stochastic parabolic equations]{Schauder estimates and classical solutions of the Dirichlet problem for stochastic parabolic equations}

\author{Kai Du}
\address{Shanghai Center for Mathematical Sciences, Fudan University, Shanghai 200438, China.}
\email{kdu@fudan.edu.cn}
\date{\today}

\begin{document}

\begin{abstract}
	We study second-order stochastic parabolic equations in a cylindrical domain with homogeneous Dirichlet boundary conditions. 
	Under a natural compatibility condition on the gradient-type noise, we establish global Schauder estimates in stochastic H\"older spaces for the Dirichlet problem.
	The coefficients and free terms are assumed to be H\"older continuous in the spatial variables, while only their boundary traces are required to be H\"older in time. 
	As a consequence, we obtain existence and uniqueness of quasi-classical solutions in stochastic H\"older spaces, and further derive pathwise classical solvability in H\"older classes.
\end{abstract}

\maketitle

\section{Introduction}

Higher regularity theory for nonlinear stochastic partial differential equations is often based on the corresponding properties of the associated linear equations. In this paper we consider the Dirichlet problem for stochastic parabolic equations of It\^o type in a domain $G\subset \R^n$:
\begin{equation*}\label{eq:intro-main}
du = (a^{ij}u_{x_i x_j}+b^iu_{x_i}+cu+f)\md t + (\sigma^{ik}u_{x_i}+\nu^k u+g^k)\md w_t^k
\end{equation*}
under homogeneous boundary conditions. Here $\{w^k\}_{k\ge1}$ are independent standard Wiener processes defined on a filtered complete probability space, and the coefficients, the free terms, and the unknown function are all adapted random fields. Our purpose is to develop a Schauder-type theory for this problem under suitable compatibility conditions and, as a consequence, to obtain classical solvability.

The regularity theory of parabolic SPDEs has been studied for several decades. In the framework of Sobolev spaces, the well-posedness theory in variational form goes back to the work of Pardoux and Krylov--Rozovski\u{\i}; see~\cite{Pardoux1975,KrylovRozovskii1977}. Later, a rather complete $L^p$-theory for the Cauchy problem in the whole space was established by Krylov~\cite{Krylov1996Lp,Krylov1999Analytic}, and important extensions to boundary value problems were obtained by Krylov, Lototsky, Kim, and others; see, for example,~\cite{Krylov1994Dirichlet,KrylovLototsky1999,Lototsky2000,Kim2004Divergence,Kim2004Variable}. In particular, under suitable compatibility conditions one can recover the usual $W^{2,p}$-regularity in domains instead of working in weighted Sobolev spaces; see~\cite{Du2020}. Another important line of development is based on stochastic maximal regularity in Banach spaces; see~\cite{vanNeervenVeraarWeis2012,PortalVeraar2019} and the references therein. 

There is also a parallel line of research based on H\"older spaces, corresponding to the classical Schauder theory for deterministic elliptic and parabolic equations; see~\cite{GilbargTrudinger2001,Krylov1996Holder}. For the Cauchy problem in the whole space, the sharp $C^{2+\alpha}$-theory was obtained by Mikulevi\v{c}ius~\cite{Mikulevicius2000} for special classes of equations, and later in full generality by Du--Liu~\cite{DuLiu2019}. This gives a satisfactory H\"older theory in the absence of boundaries. The theory has been further extended through stochastic maximal regularity and singular-integral techniques~\cite{AgrestiVeraar2022,AgrestiVeraar2025,LoristVeraar2021}, and regularity estimates play a central role in numerical analysis and approximation schemes for SPDEs~\cite{KovacsLangPetersson2023,BreitProhl2024,BreitProhlWichman2025}.

For the Dirichlet problem, however, the situation is more delicate. A basic difficulty comes from the possible singular behavior of derivatives near the boundary. In contrast to deterministic equations, gradient noise may produce severe boundary irregularities even in simple one-dimensional examples. This was illustrated by Krylov~\cite{Krylov2003Brownian}, and shows that one cannot expect a boundary Schauder theory under assumptions comparable to those used in the whole-space case. In the literature, boundary value problems for SPDEs have been treated successfully in weighted Sobolev spaces and weighted H\"older spaces; see~\cite{Krylov1994Dirichlet,Lototsky2000,Kim2004Divergence,Kim2004Variable,MikuleviciusPragarauskas2003}. Such theories allow derivatives of solutions to grow near the boundary, and hence are well suited for equations with rough boundary behavior. If one is interested instead in bounded derivatives up to the boundary, then some compatibility conditions are needed.

A natural compatibility condition was proposed in our previous work~\cite{Du2020} on $W^{2,p}$-solutions in general domains: the vector fields $\sigma_{\cdot k}$ are tangent to the boundary, namely
\begin{equation*}\label{eq:intro-compatibility}
n(x)\cdot \sigma_{\cdot k}(t,x)=0 \qquad \text{on }\partial G
\end{equation*}
for all $k$, where $n(x)$ denotes the unit outward normal vector at $x\in\partial G$. Under this condition, together with the vanishing boundary trace of $g$, one can obtain $W^{2,p}$-regularity in the usual Sobolev spaces. This shows that the tangentiality condition is an appropriate geometric condition for recovering second-order boundary regularity in the Sobolev setting. The aim of the present paper is to show that the same condition is also sufficient for a boundary Schauder theory.

More precisely, we establish global Schauder estimates in stochastic H\"older spaces for the Dirichlet problem under the above compatibility condition (Theorem~\ref{thm:schauder}). 
These estimates yield existence and uniqueness of quasi-classical solutions in stochastic H\"older spaces (Theorem~\ref{thm:solvability}). 
As a further consequence, we obtain pathwise classical solvability in H\"older classes (Theorem~\ref{thm:classical}). 
In this sense, the present work may be viewed as the boundary counterpart of the sharp H\"older theory for the Cauchy problem in the whole space developed in our previous work~\cite{DuLiu2019}. Together with the $W^{2,p}$-regularity results of~\cite{Du2020} established under the same tangentiality condition, these theorems indicate that this condition provides a unified geometric foundation for both Sobolev and Schauder boundary regularity theories for SPDEs on domains.

The proof proceeds by first flattening the boundary to a half-space and establishing estimates for a frozen-coefficient model equation. Tangential derivatives are handled by adapting the localization and approximation scheme of~\cite{DuLiu2019} to boundary cylinders.

For the first normal derivative, an additional difficulty arises from the fact that the second-order operator is not diagonal: a direct odd extension is not available, and stochastic flow transformations are incompatible with the $L_\omega^\gamma$-valued H\"older norms used here. We overcome this by a reformulation of the equation that exploits the algebraic structure of the cross-derivative terms, allowing an odd/even extension and reducing the estimate to the whole-space theory of~\cite{DuLiu2019}.

The most difficult part is controlling the second normal derivative. After removing the stochastic noise, the reduced equation still carries a contribution from the original forcing on the flat boundary, which prevents a direct odd extension. This obstacle is removed by splitting off the boundary trace through a further correction term. The estimation of this term brings in the central novelty of the paper: a boundary estimate for the one-dimensional heat equation on the half-line with time-dependent Dirichlet data (Lemma~\ref{lem:half-line-boundary}). This lemma provides optimal $C^{\alpha,\alpha/2}$-control of the solution and its second derivative in stochastic H\"older norms solely in terms of the H\"older seminorm of the time derivative of the boundary datum.

The paper is organized as follows. In Section~\ref{sec:main-results} we introduce the basic notation and state the main results. In Section~\ref{sec:halfspace} we study a model equation in a half-space and establish the boundary H\"older estimates needed later. In Section~\ref{sec:global-estimate} we pass from the half-space model to general domains by boundary flattening, perturbation, and localization, and complete the proof of Theorem~\ref{thm:schauder}. In Section~\ref{sec:solvability} we prove the solvability theorem in stochastic H\"older spaces by the method of continuity. Finally, in Section~\ref{sec:classical} we derive the pathwise classical solvability result.

\section{Main results}\label{sec:main-results}

Let $T>0$ be fixed, and let
\[
Q_T:=G\times(0,T], \qquad \bar Q_T:=\bar G\times[0,T], \qquad \Gamma_T:=\partial G\times[0,T].
\]
Throughout the paper, $(\Omega,\cF,(\cF_t)_{t\ge0},\PP)$ is a complete filtered probability space carrying a sequence of independent standard Wiener processes $\{w_t^k\}_{k\ge1}$. We consider the Dirichlet problem
\begin{equation}\label{eq:main}
du=\bigl(a^{ij}D_{ij}u+b^iD_i u+cu+f\bigr)\md t+\bigl(\sigma^{ik}D_i u+\nu^k u+g^k\bigr)\md w_t^k
\quad \text{in }Q_T,
\end{equation}
with the boundary and initial conditions
\begin{equation}\label{eq:boundary-initial}
u=0 \quad \text{on } \Gamma_T,
\qquad
u(0,\cdot)=0 \quad \text{in } G.
\end{equation}
Einstein's summation convention is used with $i,j=1,\dots,n$ and $k=1,2,\dots$ in this paper.

For a Banach space $E$, a domain $\mathcal O\subset\R^n$, and an interval $I\subset\R$, write $Q=\mathcal O\times I$. For a function $h:Q\to E$, an integer $m\ge 0$, and $\alpha\in(0,1)$, we set
\[
|h|_{m;Q}^E:=\max_{|\beta|\le m}\sup_{(x,t)\in Q}\|D^\beta h(x,t)\|_E,
\qquad
[h]_{m+\alpha;Q}^E:=\max_{|\beta|=m}\sup_{t\in I}\sup_{x\ne y\in\mathcal O}
\frac{\|D^\beta h(x,t)-D^\beta h(y,t)\|_E}{|x-y|^\alpha},
\]
and
\[
[h]_{(m+\alpha,\alpha/2);Q}^E:=\max_{|\beta|=m}
\sup_{\substack{(x,t),(y,s)\in Q\\(x,t)\ne(y,s)}}
\frac{\|D^\beta h(x,t)-D^\beta h(y,s)\|_E}{|x-y|^\alpha+|t-s|^{\alpha/2}}.
\]
We further write
\[
|h|_{m+\alpha;Q}^E:=|h|_{m;Q}^E+[h]_{m+\alpha;Q}^E,
\qquad
|h|_{(m+\alpha,\alpha/2);Q}^E:=|h|_{m;Q}^E+[h]_{(m+\alpha,\alpha/2);Q}^E.
\]
In this paper, $E$ will be either $L_\omega^\gamma:=L^\gamma(\Omega;\R)$ or $L_\omega^\gamma(\elltwo):=L^\gamma(\Omega;\elltwo)$, where $\gamma\ge2$ is fixed. The spaces $C_x^{m+\alpha}(Q;E)$ and $C_{x,t}^{m+\alpha,\alpha/2}(Q;E)$ are defined in the obvious way. For functions on $\Gamma_T$, the parabolic H\"older norm $|\cdot|_{(\alpha,\alpha/2);\Gamma_T}^E$ is understood in the standard sense.

\begin{definition}[Quasi-classical solution]
A predictable random field $u:\bar Q_T\times\Omega\to\R$ is called a quasi-classical solution of \eqref{eq:main}--\eqref{eq:boundary-initial} if
\begin{enumerate}[label=\rm(\roman*)]
\item for each $t\in(0,T]$, the map $x\mapsto u(x,t)$ is twice strongly differentiable from $G$ into $L_\omega^\gamma$, and $u(\cdot,t)$ together with its spatial derivatives up to order two extends continuously to $\bar G$ as an $L_\omega^\gamma$-valued function;
\item for each $x\in G$, the process $u(x,\cdot)$ is stochastically continuous and satisfies
\[
\begin{aligned}
u(x,t)-u(x,0) & =\int_0^t \bigl(a^{ij}D_{ij}u+b^iD_i u+cu+f\bigr)(x,s)\md s \\
& \quad +\int_0^t \bigl(\sigma^{ik}D_i u+\nu^k u+g^k\bigr)(x,s)\md w_s^k
\end{aligned}
\]
almost surely for all $t\in[0,T]$;
\item $u=0$ on $\Gamma_T$ and $u(0,\cdot)=0$ in $G$ almost surely.
\end{enumerate}
\end{definition}

The standing assumptions are as follows.

\begin{assumption}\label{ass:compatibility}
For each $k\ge 1$,
\begin{equation}\label{eq:compatibility}
n(x)\cdot \sigma_{\cdot k}(t,x,\omega)=0 \qquad \text{for all }(t,x)\in[0,T]\times\partial G
\end{equation}
almost surely, where $n(x)$ denotes the unit outward normal vector at $x\in\partial G$.
\end{assumption}

\begin{assumption}\label{ass:parabolicity}
There exist constants $\kappa,K>0$ such that
\begin{equation}\label{eq:parabolicity}
\kappa|\xi|^2+\sigma^{ik}(t,x)\sigma^{jk}(t,x)\xi_i\xi_j\le 2a^{ij}(t,x)\xi_i\xi_j\le K|\xi|^2
\end{equation}
for all $(t,x,\omega)\in[0,T]\times G\times\Omega$ and all $\xi\in\R^n$.
\end{assumption}

\begin{assumption}\label{ass:coefficients}
The coefficients
\[
a^{ij},\, b^i,\, c: \bar Q_T\times\Omega\to\R, \qquad \sigma^i=(\sigma^{ik})_{k\ge1},\, \nu=(\nu^k)_{k\ge1}: \bar Q_T\times\Omega\to\elltwo
\]
are predictable. Moreover, for some $\alpha\in(0,1)$,
\[
a^{ij},\ b^i,\ c,\ \nu\in C_x^\alpha(\bar Q_T;L_\omega^\infty),
\qquad
\sigma^i\in C_x^{1+\alpha}(\bar Q_T;L_\omega^\infty),
\]
and the restrictions of $a^{ij}$ and $b^i$ to $\Gamma_T$ belong to $C_{x,t}^{\alpha,\alpha/2}(\Gamma_T;L_\omega^\infty)$. We denote by $L$ and $L_\alpha$ the corresponding bounds.
\end{assumption}

\begin{assumption}\label{ass:data}
The free terms
\[
f:\bar Q_T\times\Omega\to\R, \qquad g=(g^k)_{k\ge1}:\bar Q_T\times\Omega\to\elltwo
\]
are predictable and satisfy
\[
f\in C_x^\alpha(\bar Q_T;L_\omega^\gamma), \qquad f|_{\Gamma_T}\in C_{x,t}^{\alpha,\alpha/2}(\Gamma_T;L_\omega^\gamma), 
\qquad f(0,\cdot) \vert_{\partial G} = 0,
\]
and
\[
g\in C_x^{1+\alpha}(\bar Q_T;L_\omega^\gamma(\elltwo)), \qquad g=0 \quad \text{on }\Gamma_T.
\]
\end{assumption}

\begin{remark}\label{rem:corner-compatibility}
The compatibility condition $f(0,\cdot) \vert_{\partial G} = 0$ is the usual corner compatibility condition for zero initial and homogeneous Dirichlet boundary data~\cite{MikuleviciusPragarauskas2003}. It is needed for unweighted $C^{2+\alpha}$-regularity up to $\{0\}\times\partial G$. The condition $g=0$ on $\Gamma_T$ already implies $g(0,\cdot)=0$ on $\partial G$, so no additional corner condition for $g$ is required in the zero-initial case.
\end{remark}

We can now state the main results of the paper. 
The first theorem gives the global Schauder estimate, which is the basic a priori estimate of the paper.

\begin{theorem}[Global Schauder estimate]\label{thm:schauder}
Let $G$ be a domain of class $C^{2+\alpha}$ in the standard sense, and let Assumptions~\ref{ass:compatibility}--\ref{ass:data} be satisfied. If $u$ is a quasi-classical solution of \eqref{eq:main}--\eqref{eq:boundary-initial}, then
\begin{equation}\label{eq:schauder-estimate}
|u|_{(2+\alpha,\alpha/2);Q_T}^{L_\omega^\gamma}
\le Ce^{CT}\Bigl(|f|_{\alpha;Q_T}^{L_\omega^\gamma}+|f|_{(\alpha,\alpha/2);\Gamma_T}^{L_\omega^\gamma}+|g|_{1+\alpha;Q_T}^{L_\omega^\gamma(\elltwo)}\Bigr),
\end{equation}
where $C$ depends only on $n$, $\alpha$, $\gamma$, $\kappa$, $K$, $L$, $L_\alpha$, and the $C^{2+\alpha}$-character of $G$.
\end{theorem}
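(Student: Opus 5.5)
We follow the classical Schauder route, with a half-space model problem as the core.

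\emph{Step 1: model problem.} Flatten the boundary locally by a $C^{2+\alpha}$ diffeomorphism, so that $G$ becomes $\R^n_+=\{x_1>0\}$. Assumption~\ref{ass:compatibility} then becomes $\sigma^{1k}=0$ on $\{x_1=0\}$, and the Dirichlet and initial data stay zero. Freeze the coefficients at a boundary point, so that $a^{ij}(t)$ and $\sigma^{ik}(t)$ depend only on $(t,\omega)$, with $\sigma^{1k}\equiv 0$. For this equation in the half-space we prove
\[
[u]_{(2+\alpha,\alpha/2)}\le C\bigl([f]_{\alpha}+[f|_{x_1=0}]_{(\alpha,\alpha/2)}+[g]_{1+\alpha}\bigr).
\]

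\emph{Tangential derivatives.} The derivatives $D_{x'}u$ solve an equation of the same form, so they can be estimated by adapting the localization and approximation scheme of \cite{DuLiu2019} to boundary cylinders.

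\emph{First normal derivative.} Since $a^{1j}\neq 0$ for $j\ge2$, a direct odd reflection fails. We remove the cross terms by the linear change of variables $y'=x'-x_1a^{1\cdot}/a^{11}$, which preserves $\{x_1=0\}$ and does not reintroduce noise in the normal direction because $\sigma^{1k}=0$. After this change we apply an odd extension to $u$ and even/odd extensions to the data, and invoke the whole-space theory of \cite{DuLiu2019}.

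\emph{Second normal derivative.} From the equation, $a^{11}D_{11}u$ equals $du/dt$ minus tangential terms minus $f$. On the boundary, $u=0$ and $g=0$, so there $a^{11}D_{11}u=-f-\sum_{i,j\ge2}a^{ij}D_{ij}u\cdots$, which reduces to $-f$ plus terms that vanish. The trace of $f$ prevents an odd extension. We therefore subtract a correction $v$ solving the one-dimensional heat equation in $x_1$ on the half-line, with Dirichlet data built from $\int_0^t f(0,x',s)\,ds$. Here we need a half-line estimate: the $C^{\alpha,\alpha/2}$ norms of $v$ and $D_{11}v$ in $L^\gamma_\omega$ are controlled by the Hölder seminorm in time of the boundary datum's time derivative, namely $f|_{x_1=0}$. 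This estimate uses the corner condition $f(0,\cdot)|_{\partial G}=0$, and we prove it through explicit heat-kernel representations, differentiating the Duhamel boundary formula. We expect this to be the main obstacle, since the bound must hold in $L^\gamma_\omega$-valued Hölder norms with only $C^{\alpha/2}$ regularity in time. Once $v$ is subtracted, the remainder admits an odd extension, and the whole-space estimate controls $D_{11}$.

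\emph{Step 2: variable coefficients and globalization.} Take a partition of unity with small radii $r$ and freeze the coefficients. The perturbation terms $(a-a(z_0))D^2u$ are small times $[u]_{2+\alpha}$ plus $C(r)|u|_2$. The boundary coefficients are Hölder in time only on $\Gamma_T$, which is why the trace hypotheses appear. The noise perturbation $(\sigma-\sigma(z_0))Du$ needs $\sigma\in C^{1+\alpha}_x$. Interior patches are handled by \cite{DuLiu2019}. Summing over patches gives
\[
|u|_{(2+\alpha,\alpha/2)}\le \tfrac12|u|_{(2+\alpha,\alpha/2)}+C(\text{data})+C\sup_{t}|u(t)|_{0}.
\]
The lower-order term $\sup_t|u(t)|_0$ in $L^\gamma_\omega$ is bounded by interpolation together with an Itô/Gronwall estimate. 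Applying the same estimate on $[0,t]$ and using Gronwall in $t$ produces the factor $e^{CT}$ and yields the global bound~\eqref{eq:schauder-estimate}.
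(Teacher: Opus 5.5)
\textbf{The gap is in your first-normal-derivative step.} It does not work under the hypotheses of the theorem, and the failure carries over to your $D_{11}$ step.

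After freezing, the model coefficients are $\bar a^{ij}=\bar a^{ij}(t,\omega)$. In the localization these are $\widetilde a^{ij}(t,0)$, which are at best $C^{\alpha/2}$ in $t$, and $\sigma$ has no time regularity at all. So $\beta(t):=\bar a^{1\cdot}(t)/\bar a^{11}(t)$ is a random, non-differentiable predictable process. The substitution $\tilde u(t,y)=u(t,y_1,y'+y_1\beta(t))$ therefore does not yield an equation of the same type. Formally it adds a term $y_1D_{x'}u\cdot\md\beta(t)$, which is meaningless when $\beta$ is not a semimartingale, and unbounded in $y_1$ even when it is.

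Even if you froze $a$ in time as well, $\beta$ would still depend on $\omega$. The $L_\omega^\gamma$-valued H\"older norms take the expectation before the difference quotient, so they are not preserved by an $\omega$-dependent change of variables: $\E|Du(\phi_\omega(y))-Du(\phi_\omega(z))|^\gamma$ is not controlled by $\sup_{x\ne x'}\E|Du(x)-Du(x')|^\gamma/|x-x'|^{\gamma\alpha}$ when the evaluation points are random. Hence the whole-space estimate of \cite{DuLiu2019} cannot be transferred back to $u$.

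\textbf{How the paper avoids this.} It uses a purely algebraic reformulation, valid pointwise in $(t,\omega)$. Write $f=D_if^i$ and $2\bar a^{1i}D_{1i}u=D_1(2\bar a^{1i}D_iu)$. Set $\widetilde f^1=f^1+2\sum_{i\ge2}\bar a^{1i}D_iu+\sum_{i\ge2}F^i$ and $\widetilde f^i=f^i-F^i$ for $i\ge2$, where $F^i(t,x)=f^i(t,0,x_2,\dots,x_i+x_1,\dots,x_n)$. Then $D_1F^i=D_iF^i$ and $\widetilde f^i=0$ on $\{x_1=0\}$. The odd extension of $u$, with $\widetilde f^1$ extended evenly and $\widetilde f^i$ oddly, solves a whole-space equation without cross terms. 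The cost is a term $|D_{x'}u|_{\alpha}$ inside $|\widetilde f^1|_\alpha$, which the tangential estimate supplies.

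\textbf{Consequences for your $D_{11}$ step.} Without the change of variables, on $\{x_1=0\}$ you have
$\bar a^{11}D_{11}u=-f-2\sum_{j\ge2}\bar a^{1j}D_{1j}u$,
and $D_1D_{x'}u$ does not vanish there. So the Dirichlet datum of your half-line correction is not built from $\int_0^tf(0,x',s)\,\md s$ alone. It must also contain the trace of the mixed derivatives, which requires $C^{\alpha/2}$-in-time control of $D_1D_{x'}u$ on the boundary. The paper gets this by proving the first-normal-derivative bound in the parabolic $(\alpha,\alpha/2)$ norm and applying it to tangential difference quotients.

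You also have to remove the noise before the one-dimensional reduction. Once the tangential and mixed terms are moved into the forcing, the remaining operator $\bar a^{11}D_{11}$ is degenerate, and the whole-space stochastic Schauder estimate no longer applies. The paper first subtracts $U$ solving $\md U=\Delta U\md t+\bar\sigma^{ik}D_iu\md w_t^k$. Because $\bar\sigma^{1k}=0$, the bound for $U$ involves only $D_xD_{x'}u$. The remaining equation $\partial_t\widetilde u=\bar a^{11}D_{11}\widetilde u+\widetilde f$ is noise-free, and the half-line lemma and odd extension are applied to it.

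Your tangential step, your use of the half-line lemma with the corner condition, and your globalization otherwise match the paper.
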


The next theorem shows that the above estimate is sufficient to obtain existence and uniqueness in stochastic H\"older spaces.

\begin{theorem}[Solvability in stochastic H\"older spaces]\label{thm:solvability}
Let $G$ be a domain of class $C^{2+\alpha}$ in the standard sense, and let Assumptions~\ref{ass:compatibility}--\ref{ass:data} be satisfied. Then the Dirichlet problem \eqref{eq:main}--\eqref{eq:boundary-initial} admits a unique quasi-classical solution
\[u\in C_{x,t}^{2+\alpha,\alpha/2}(Q_T;L_\omega^\gamma).
\]
Moreover, the solution satisfies the estimate \eqref{eq:schauder-estimate}.
\end{theorem}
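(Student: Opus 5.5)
The plan is the method of continuity, with Theorem~\ref{thm:schauder} as the a priori estimate. Uniqueness is immediate. If $u_1,u_2$ are two quasi-classical solutions in $C_{x,t}^{2+\alpha,\alpha/2}(Q_T;L_\omega^\gamma)$, their difference solves \eqref{eq:main}--\eqref{eq:boundary-initial} with $f=0$ and $g=0$. Estimate \eqref{eq:schauder-estimate} then forces the difference to vanish. For existence, set
\[
a_\lambda^{ij}=(1-\lambda)\delta^{ij}+\lambda a^{ij},\qquad b_\lambda=\lambda b,\quad c_\lambda=\lambda c,\quad \sigma_\lambda=\lambda\sigma,\quad \nu_\lambda=\lambda\nu,\qquad \lambda\in[0,1].
\]
These coefficients satisfy Assumptions~\ref{ass:compatibility}--\ref{ass:coefficients} uniformly in $\lambda$: tangentiality is preserved by scaling, and the parabolicity \eqref{eq:parabolicity} holds with constants $\min(\kappa,2)$ and $\max(K,2)$, since $\lambda^2\sigma\sigma^\top\le\lambda\sigma\sigma^\top$. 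Hence Theorem~\ref{thm:schauder} gives a bound independent of $\lambda$.

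Next I fix the spaces. Let $\mathcal U$ be the Banach space of predictable $u\in C_{x,t}^{2+\alpha,\alpha/2}(Q_T;L_\omega^\gamma)$ with $u=0$ on $\Gamma_T$ and $u(0,\cdot)=0$. Let $\mathcal F$ be the space of pairs $(f,g)$ satisfying Assumption~\ref{ass:data}, normed by the right-hand side of \eqref{eq:schauder-estimate}. The natural operator $u\mapsto(du-\mathcal L_\lambda u\,dt-\mathcal M_\lambda u\,dw)$ is not a map between these spaces, so I argue with fixed points instead. Suppose the problem is solvable for $\lambda_0$ and every datum in $\mathcal F$. For $\lambda=\lambda_0+\varepsilon$ and $v\in\mathcal U$, let $u=\Phi v$ solve the $\lambda_0$-problem with data
\[
f+\varepsilon\bigl((a-\delta)^{ij}D_{ij}v+b^iD_iv+cv\bigr),\qquad g+\varepsilon\bigl(\sigma^{ik}D_iv+\nu^kv\bigr).
\]
I must check these data lie in $\mathcal F$ with norm bounded by $C\|v\|_{\mathcal U}$. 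The forcing term is $C^\alpha_x$ because the coefficients are $C^\alpha_x$ with $L^\infty_\omega$ bounds. Its restriction to $\Gamma_T$ is $C^{\alpha,\alpha/2}$ because the boundary traces of $a$ and $b$ are $C^{\alpha,\alpha/2}$, $D^2v$ and $Dv$ are parabolically H\"older, and $v=0$ kills the $c$-term on $\Gamma_T$. The corner condition holds since $v(0,\cdot)=0$ gives $D^2v(0,\cdot)=0$. For the noise term, $\sigma^{ik}D_iv$ vanishes on $\Gamma_T$ because $Dv$ is normal to $\partial G$ (as $v=0$ there) and $\sigma$ is tangential by Assumption~\ref{ass:compatibility}. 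Also $\sigma\in C^{1+\alpha}_x$, so this term is $C^{1+\alpha}_x$. Then \eqref{eq:schauder-estimate} shows $\Phi$ is a contraction for $\varepsilon<\varepsilon_0$, with $\varepsilon_0$ independent of $\lambda_0$. Its fixed point solves the $\lambda$-problem, and finitely many steps reach $\lambda=1$.

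The main obstacle is the base case $\lambda=0$, the deterministic-coefficient problem $du=(\Delta u+f)\,dt+g^k\,dw^k$ in $G$ with zero boundary and initial data. I would split $u=v+z$, where
\[
z(t)=\int_0^t e^{(t-s)\Delta_D}g^k(s)\,dw_s^k
\]
is the stochastic convolution with the Dirichlet heat semigroup. Since $g\in C^{1+\alpha}_x$ vanishes on $\Gamma_T$, the goal is to show $z\in C^{2+\alpha,\alpha/2}$ in $L^\gamma_\omega$. This uses the BDG inequality together with heat kernel bounds $|D^2\partial^\beta p_D|$ in the spirit of \cite{DuLiu2019}; the vanishing trace is what makes the boundary integration by parts possible. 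The remainder $v$ then solves the deterministic heat equation $\partial_tv=\Delta v+f$ pathwise. For this I use classical Schauder theory with $f$ only $C^\alpha$ in $x$ but $C^{\alpha,\alpha/2}$ on the boundary, applied in Bochner form with values in $L^\gamma_\omega$.

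A cleaner alternative avoids this delicate Dirichlet-kernel analysis. First obtain a weak (e.g.\ $W^{2,p}$ or variational) solution from \cite{Du2020} under the same tangentiality condition. Then mollify the data in $t$ and $x$ within the class, solve, and pass to the limit using the uniform bound \eqref{eq:schauder-estimate} plus Arzel\`a--Ascoli-type compactness in $C^{2+\alpha'}$. Finally, verify that the limit is a quasi-classical solution inheriting the estimate. I expect checking the quasi-classical regularity and the stochastic integral identity in the limit to be routine. The genuinely delicate point is the boundary regularity of the base-case solution, or of the approximations, up to $\Gamma_T$.
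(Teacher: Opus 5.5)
Your uniqueness argument and your continuity step coincide with the paper's. You use the same homotopy, $L_s=s(a^{ij}D_{ij}+b^iD_i+c)+(1-s)\Delta$ and $\Lambda_s^k=s(\sigma^{ik}D_i+\nu^k)$, and the same contraction (the paper's $\mathcal R_{s,s_0}$), with step size uniform in $s_0$. You also check that the perturbed data stay admissible, which the paper omits: $cv$ has zero trace, so $c$ needs no time regularity on $\Gamma_T$; the corner condition follows from $v(0,\cdot)=0$; and $\sigma^{ik}D_iv=0$ on $\Gamma_T$ by tangentiality. One term slips through. The noise datum also contains $\varepsilon\nu^kv$, which lies in $C_x^{1+\alpha}$ only if $\nu\in C_x^{1+\alpha}$, whereas Assumption~\ref{ass:coefficients} as written gives only $\nu\in C_x^\alpha$. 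The paper tacitly needs the same stronger hypothesis, already in Section~\ref{sec:global-estimate}, where $\widetilde\nu^kv$ enters $|G|_{1+\alpha}$.

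The genuine gap is the base case. The paper's device is that linearity and the uniform estimate mean you only need quasi-classical solvability for regular data, followed by a limit passage. Your route (a) skips this and instead needs results that are not off the shelf. Besides the Dirichlet-kernel analysis of the stochastic convolution up to a curved boundary, it needs a Schauder estimate for $\partial_tv=\Delta v+f$ with $f$ merely bounded in $t$ and time-H\"older only on $\Gamma_T$. That estimate is not classical, since the classical theory assumes $f\in C^{\alpha,\alpha/2}$ jointly. The time regularity of $D^2v$ at the boundary is governed by $f|_{\Gamma_T}$, and this is exactly what Lemmas~\ref{lem:half-line-boundary} and~\ref{lem:model-g-zero} establish. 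So what you need is Theorem~\ref{thm:schauder} with $\sigma=g=0$ plus existence, and you are back to approximation.

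Route (b) has the right shape but the wrong limit mechanism. Bounded subsets of $L^\gamma(\Omega)$ are not precompact, so uniform bounds in $C_{x,t}^{2+\alpha,\alpha/2}(Q_T;L_\omega^\gamma)$ give no Arzel\`a--Ascoli subsequence. You must identify the limit by linear estimates in a weaker topology, for instance the $W^{2,p}$-theory of \cite{Du2020} or the mild-solution formula. Equicontinuity and interpolation then upgrade this to locally uniform convergence of $D^2u_m$ in $L_\omega^\gamma$. Alternatively, use weak compactness in the reflexive space $L_\omega^\gamma$ together with weak lower semicontinuity of the H\"older quotients.

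With that repair your scheme is actually sturdier than the paper's Cauchy argument. That argument assumes smooth $f_m\to f$ in $C_x^\alpha(\bar Q_T;L_\omega^\gamma)$, which is impossible in general. Smooth functions are dense only in the little H\"older space, and an $f$ that is merely bounded and discontinuous in $t$ is not a uniform limit of time-continuous data. What is really needed is uniform bounds plus convergence in a weaker norm, as you propose. In both versions, quasi-classical regularity of the regular-data solutions up to $\Gamma_T$ is still only asserted by the paper, and you leave it open.
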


The stochastic H\"older theory also yields a pathwise classical solvability result. For $0<\alpha<1$, let $\mathcal C_{l.b.}^{\alpha-}(Q_T)$ denote the space of all predictable random fields $h$ on $\bar Q_T$ for which there exist stopping times $\tau_m\uparrow T$ a.s. such that, for every $\beta\in(0,\alpha)$,
\[
\operatorname*{ess\,sup}_{\omega\in\Omega}|h(\omega)|_{\beta;G\times[0,\tau_m(\omega)]}<\infty,
\qquad m\ge 1.
\]
Similarly, $\mathcal C_{l.b.}^{2+\alpha-,\alpha/2-}(Q_T)$ is defined by requiring that for some stopping times $\tau_m\uparrow T$ a.s.,
\[
\operatorname*{ess\,sup}_{\omega\in\Omega}|u(\omega)|_{(2+\beta,\beta/2);G\times[0,\tau_m(\omega)]}<\infty
\qquad\text{for every }\beta\in(0,\alpha),\ m\ge1.
\]
For boundary traces we use the analogous notation on $\Gamma_T$, with the parabolic H\"older norms taken on the boundary cylinders.
For Banach-valued random fields, the same notation is used with the corresponding Banach norm.

\begin{theorem}[Classical solvability]\label{thm:classical}
Let $\alpha\in(0,1)$, and let Assumptions~\ref{ass:compatibility}--\ref{ass:parabolicity} hold. Assume that the coefficients and free terms belong locally boundedly to the corresponding pathwise H\"older classes, namely,
\[
\begin{gathered}
a^{ij},\, b^i,\, c,\, f \in \mathcal C_{l.b.}^{\alpha-}(Q_T),
\quad
\nu \in \mathcal C_{l.b.}^{\alpha-}(Q_T;\elltwo),
\quad
\sigma^i,\, g \in \mathcal C_{l.b.}^{1+\alpha-}(Q_T;\elltwo),
\\
a^{ij}|_{\Gamma_T},\, b^i|_{\Gamma_T},\,
f|_{\Gamma_T}\in \mathcal C_{l.b.}^{\alpha-,\alpha/2-}(\Gamma_T),
\quad
g|_{\Gamma_T} = 0,
\quad
f|_{\partial G\times\{0\}} = 0 \ \text{a.s.}
\end{gathered}
\]
Then the Dirichlet problem \eqref{eq:main}--\eqref{eq:boundary-initial} admits a unique solution
\[
u\in \mathcal C_{l.b.}^{2+\alpha-,\alpha/2-}(Q_T).
\]
\end{theorem}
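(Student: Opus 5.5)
The plan is to derive Theorem~\ref{thm:classical} from Theorem~\ref{thm:solvability} by localization in $\omega$ with stopping times, combined with an embedding from stochastic H\"older spaces into pathwise H\"older classes. This is the same route as for the Cauchy problem in~\cite{DuLiu2019}.

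\textbf{Step 1: truncation.} Fix $\beta\in(0,\alpha)$ and choose a common sequence of stopping times $\tau_m\uparrow T$ that works for all coefficients and data at once. On $[0,\tau_m]$ all pathwise norms of order $\beta'\in(\beta,\alpha)$ are bounded by a deterministic constant $M_m$.

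The truncated coefficients are built as follows. Freeze the coefficients after $\tau_m$, i.e.\ use $a^{ij}(x,t\wedge\tau_m)$, and the same for $b^i,c,\nu,\sigma^i$. This keeps predictability, Assumptions~\ref{ass:compatibility}--\ref{ass:parabolicity}, and $L^\infty_\omega$-bounds in $C^{\beta'}_x$ and on $\Gamma_T$ in $C^{\beta',\beta'/2}_{x,t}$.

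The free terms need a different treatment, since freezing $f$ at $\tau_m$ would break time regularity on the boundary trace less badly but would not stop the noise. Take $f_m:=f\,1_{t\le\tau_m}$ and $g_m:=g\,1_{t\le\tau_m}$. The indicator destroys time H\"older continuity of $f|_{\Gamma_T}$, so instead use $f(x,t\wedge\tau_m)$ and accept that the solution is only needed on $[0,\tau_m]$. For $g$, the spatial norm alone suffices, so cutting off with $1_{t\le\tau_m}$ is harmless. All of these then satisfy Assumptions~\ref{ass:coefficients}--\ref{ass:data} with exponent $\beta'$ and every $\gamma$; the conditions $g_m|_{\Gamma_T}=0$ and $f_m(0)|_{\partial G}=0$ are preserved.

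\textbf{Step 2: solve the truncated problems.} Theorem~\ref{thm:solvability} gives a unique $u_m\in C^{2+\beta',\beta'/2}_{x,t}(Q_T;L^\gamma_\omega)$ for every $\gamma\ge2$. By uniqueness, all these solutions coincide as $\gamma$ varies.

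\textbf{Step 3: pathwise regularity.} This is the main obstacle. Since $\gamma$ is arbitrary, a Kolmogorov-type embedding applies: bounds of $D^2u_m$ in $C^{\beta',\beta'/2}_{x,t}(L^\gamma_\omega)$, uniform in $(x,t)$, on the bounded $C^{2+\alpha}$ domain $G$ (extended to $\R^n$ by a $C^{2+\alpha}$ extension operator). With $\gamma$ large, this gives a modification with $E\,|D^2u_m|^\gamma_{(\beta,\beta/2);Q_T}<\infty$, and similarly for the lower-order derivatives.

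This yields only $L^\gamma$-integrability, not the essential boundedness required by $\mathcal C_{l.b.}$. So define new stopping times
\[
\tau_m':=\inf\bigl\{t:\ |u_m|_{(2+\beta,\beta/2);G\times[0,t]}\ge m\bigr\}\wedge\tau_m,
\]
which are a.s.\ finite-valued, and after diagonalizing over $m$ and countably many $\beta\uparrow\alpha$ still increase to $T$. The technical care lies in checking that the running H\"older norm is continuous and adapted in $t$, so that $\tau'_m$ is a stopping time.

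\textbf{Step 4: consistency and uniqueness.} By uniqueness in Theorem~\ref{thm:solvability}, applied on the stochastic interval (stopping the equations at $\tau_m\wedge\tau_{m'}$, where the data agree), $u_m=u_{m'}$ on $[0,\tau_m\wedge\tau_{m'}]$. Setting $u:=u_m$ on $[0,\tau_m]$ defines a solution in $\mathcal C_{l.b.}^{2+\alpha-,\alpha/2-}(Q_T)$.

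For uniqueness in the class, take two solutions and stop both at stopping times where their norms and the coefficient norms are bounded. The stopped difference is then a quasi-classical solution with zero data in the $L^\gamma_\omega$ spaces, so it vanishes by Theorem~\ref{thm:schauder}.
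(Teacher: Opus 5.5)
Your proposal is correct and follows the paper's proof essentially step for step. You freeze the data as $h(t\wedge\tau_m,x)$, solve the stopped problem by Theorem~\ref{thm:solvability} with an intermediate exponent, use Kolmogorov's theorem with $\gamma$ large to get pathwise $C^{2+\beta,\beta/2}$ regularity, cap the pathwise norm by further stopping times, and get consistency and uniqueness from Theorem~\ref{thm:solvability} on stopped intervals. The differences are cosmetic: you cut $g$ off by $1_{\{t\le\tau_m\}}$ instead of freezing it (harmless, since no time regularity of $g$ is required), and you use a single capping index in place of the paper's double sequence $\rho_N^{(m)}$; your final choice of $f(x,t\wedge\tau_m)$ in Step~1 matches the paper's, despite the muddled discussion that precedes it.
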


\begin{remark}\label{rem:classical-sharp}
Theorem~\ref{thm:classical} may be regarded as a sharp classical solvability result in the sense that it identifies the natural locally bounded pathwise H\"older classes for the free terms and the corresponding solution class. The price paid for passing from stochastic H\"older spaces to pathwise classes is that the argument is nonquantitative: unlike Theorem~\ref{thm:solvability}, Theorem~\ref{thm:classical} does not provide an a priori estimate for the spaces $\mathcal C_{l.b.}^{\alpha-}(Q_T)$ and $\mathcal C_{l.b.}^{2+\alpha-,\alpha/2-}(Q_T)$.
\end{remark}

\begin{remark}\label{rem:initial-data}
The restriction to zero initial data is made for simplicity. Nonzero initial data can be treated by the usual lifting argument, provided the corresponding compatibility conditions at $\{0\}\times\partial G$ are imposed. In the present zero-initial case these reduce to $f(0,\cdot)=0$ and $g(0,\cdot)=0$ on $\partial G$, the latter being included in $g=0$ on $\Gamma_T$.
For nonzero initial data $u_0$ with homogeneous Dirichlet boundary condition, the required compatibility conditions include $u_0|_{\partial G}=0$ and
\[
\bigl(a^{ij}D_{ij}u_0+b^iD_i u_0+cu_0+f(0,\cdot)\bigr)\big|_{\partial G}=0,
\qquad
\bigl(\sigma^{ik}D_i u_0+\nu^k u_0+g^k(0,\cdot)\bigr)\big|_{\partial G}=0.
\]
For $u_0=0$, these reduce to the conditions above.
\end{remark}
\section{The model problem in a half-space}\label{sec:halfspace}

In this section we study a model equation in the half-space
\[
\R^n_+:=\{x=(x_1,x')\in\R^n:x_1>0\},
\qquad
Q_T^+:=\R^n_+\times(0,T],
\qquad
\Sigma_T:=\partial\R^n_+\times[0,T].
\]
The purpose of this section is to establish the boundary H\"older estimate for the model problem, which will serve as the basic local estimate in the proof of Theorem~\ref{thm:schauder}. Throughout this section we write
\begin{equation}\label{eq:model-half-space}
 du=(\bar a^{ij}(t)D_{ij}u+f)\md t+(\bar\sigma^{ik}(t)D_i u+g^k)\md w_t^k
 \qquad \text{in }Q_T^+,
\end{equation}
with the boundary and initial conditions
\begin{equation}\label{eq:model-half-space-bc}
 u=0 \quad \text{on }\Sigma_T,
 \qquad
 u(0,\cdot)=0 \quad \text{in }\R^n_+.
\end{equation}
We assume that $\bar a^{ij}$ and $\bar\sigma^{ik}$ are predictable processes, independent of the spatial variable $x$, satisfy the parabolicity condition \eqref{eq:parabolicity}, and obey the flat-boundary form of the compatibility condition,
\begin{equation}\label{eq:flat-compatibility}
\bar\sigma^{1k}(t)=0,
\qquad k\ge 1.
\end{equation}
Notice that \eqref{eq:flat-compatibility} is exactly the condition \eqref{eq:compatibility} when the boundary is flattened to $\{x_1=0\}$.

The main result of this section is the following estimate.

\begin{proposition}\label{prop:model-main}
Let $\alpha\in(0,1)$ and $\gamma\ge 2$. Suppose that $u$ is a quasi-classical solution of \eqref{eq:model-half-space}--\eqref{eq:model-half-space-bc} such that
\[
f\in C_x^{\alpha}(\bar Q_T^+;L_\omega^\gamma),
\qquad
f|_{\Sigma_T}\in C_{x,t}^{\alpha,\alpha/2}(\Sigma_T;L_\omega^\gamma),
\qquad
f(0,\cdot)|_{\partial\R_+^n}=0,
\]
and
\[
g\in C_x^{1+\alpha}(\bar Q_T^+;L_\omega^\gamma(\elltwo)),
\qquad
g|_{\Sigma_T} = 0.
\]
Then
\begin{align}\label{eq:model-main-estimate}
|D_x^2u|_{(\alpha,\alpha/2);Q_T^+}^{L_\omega^\gamma}
\le C\Bigl(
|u|_{0;Q_T^+}^{L_\omega^\gamma}
+|f|_{\alpha;Q_T^+}^{L_\omega^\gamma}
+|f|_{(\alpha,\alpha/2);\Sigma_T}^{L_\omega^\gamma}
+|g|_{1+\alpha;Q_T^+}^{L_\omega^\gamma(\elltwo)}
\Bigr),
\end{align}
where $C$ depends only on $n$, $\alpha$, $\gamma$, $\kappa$, and $K$.
\end{proposition}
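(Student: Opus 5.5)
The plan splits $D_x^2u$ into tangential second derivatives $D_{x'}D u$, the mixed/first normal derivative, and the pure normal derivative $D_{11}u$. Each piece is estimated by a different reduction to the whole-space $C^{2+\alpha}$-theory of~\cite{DuLiu2019}; for the last piece we add a one-dimensional half-line estimate. Throughout, $\bar a,\bar\sigma$ depend only on $t$. Hence differences in $x'$ commute with the equation and with the boundary condition, and \eqref{eq:flat-compatibility} makes the noise $\bar\sigma^{ik}D_iu$ involve only tangential derivatives.

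\emph{Step 1 (tangential derivatives).} First we localize: for a boundary cylinder we take a cutoff $\zeta$ and write the equation for $\zeta u$. The commutator terms are lower order and will be absorbed later by interpolation, $|Du|\le \varepsilon|D^2u|+C_\varepsilon|u|$. Next we mollify in $x'$ (and approximate $f,g$) as in the scheme of~\cite{DuLiu2019}. Tangential difference quotients $\delta_{h,x'}u$ satisfy the same model problem with zero Dirichlet data and free terms $\delta_h f,\delta_h g$. Since $\bar\sigma^{1k}=0$, the quantity $v=u$ (or its tangential derivative) can be extended \emph{oddly} in $x_1$ once the operator is symmetrized. The obstruction is the cross terms $\bar a^{1j}D_{1j}$, $j\ge2$. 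We remove them by the linear change of variables $y_1=x_1$, $y'=x'-(\bar a^{1j'}/\bar a^{11})$-type shear. This is not allowed pathwise because the coefficients are random and time-dependent. Instead we rewrite $\bar a^{1j}D_{1j}u=D_1(\bar a^{1j}D_ju)$ and treat $2\bar a^{1j}D_{1j}u$ ($j\ge2$) as part of the free term, involving only $D_1D_{x'}u$. This is the "algebraic structure of the cross terms" reformulation. After odd extension of $u$ (even for $D_1 u$), the equation becomes a whole-space equation with diagonal-in-$x_1$ leading part. Applying~\cite{DuLiu2019} then yields $[D_{x'}Du]_{(\alpha,\alpha/2)}$, with a term $[D_1D_{x'}u]$ on the right multiplied by a smallness factor or handled by iteration in $x'$-regularity (using the tangential derivative of the equation).

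\emph{Step 2 (normal derivative $D_{11}u$).} From the equation, $\bar a^{11}D_{11}u$ equals the drift density minus known tangential terms. The issue is time regularity, because $dt$-dynamics do not directly give Hölder continuity of $D_{11}u$ at the boundary. On $\Sigma_T$ we have $u=0$ and $g=0$. The noise there, $\bar\sigma^{ik}D_iu=\sum_{i\ge2}\bar\sigma^{ik}D_iu$, also vanishes, so $0=\bar a^{ij}D_{ij}u+f$ on the boundary, i.e. $\bar a^{11}D_{11}u=-f-(\text{tangential/mixed terms})$ on $\Sigma_T$. We split $u=u_1+u_2$. Here $u_1$ solves the problem with forcing $f-\phi$, where $\phi(t,x)=\eta(x_1)\,f(t,0,x')$ extended off the boundary. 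Then $u_1$ has zero boundary forcing trace and admits odd extension, which gives Step 1 for all derivatives via~\cite{DuLiu2019}. The correction $u_2$ solves a deterministic-in-structure heat problem driven by the boundary trace $f|_{\Sigma_T}$. After freezing $x'$, it reduces to the one-dimensional heat equation on the half-line, $\partial_t w=\bar a^{11}w_{11}$, with Dirichlet data whose time derivative is controlled by the trace of $f$. Corner compatibility $f(0,\cdot)|_{\partial}=0$ makes the data vanish at $t=0$.

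\emph{Main obstacle.} The crux is an estimate for the half-line heat equation with time-dependent Dirichlet datum $\psi(t)$:
\[
[w]_{(\alpha,\alpha/2)}+[w_{x_1x_1}]_{(\alpha,\alpha/2)}\le C[\psi']_{\alpha/2}
\]
in $L^\gamma_\omega$-valued norms. We would prove it from the explicit representation $w=\int_0^t\psi'(s)\,\mathrm{erfc}(x_1/\sqrt{4A(t,s)})\,ds$, where $A(t,s)=\int_s^t\bar a^{11}$. This gives $w_{11}=\int_0^t\psi'(s)\partial_{11}\mathrm{erfc}\,ds$. Since $\int\partial_{11}$ of the kernel in $s$ is zero up to boundary terms, we can subtract $\psi'(t)$ and use the Hölder modulus, as in classical potential estimates. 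Minkowski's inequality moves the $L^\gamma_\omega$ norm inside, which avoids stochastic flows. After this, the bounds are assembled, lower-order terms are absorbed by interpolation, and localization is removed with the covering argument. This yields \eqref{eq:model-main-estimate}.
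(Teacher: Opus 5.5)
The main gap is in your Step~2. Subtracting only $\phi=\eta(x_1)f(t,0,x')$ does not make $u_1$ oddly extendable at the $C^{2+\alpha}$ level. On $\Sigma_T$ you have $u_1=0$, $f-\phi=0$, $D_{ij}u_1=0$ for $i,j\ge2$, and the noise vanishes. The equation therefore forces $\bar a^{11}D_{11}u_1=-2\sum_{j\ge2}\bar a^{1j}D_{1j}u_1$ there, which is nonzero in general. But the odd reflection of $u_1$ is $C^2$ across $\{x_1=0\}$ only if $D_{11}u_1=0$ there.

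So the cross terms are exactly what obstruct the reflection. Moving them into the forcing gives a forcing with nonzero trace. The divergence form $D_1(\bar a^{1j}D_ju)$ helps only at the $C^{1+\alpha}$ level: the reflected potential must be even, so its $x_1$-derivative, the odd reflection of $2\bar a^{1j}D_{1j}u$, jumps. What must be removed is the trace of the \emph{whole} reduced forcing, not just that of $f$.

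The paper does this in a specific order:
\begin{enumerate}
\item It first bounds $D_xD_{x'}u$ in $C^{\alpha,\alpha/2}$, including in time.
\item It removes the noise via $dU=\Delta U\,\md t+\bar\sigma^{ik}D_iu\,\md w_t^k$, which is admissible because $\bar\sigma^{1k}=0$ makes this noise vanish on $\Sigma_T$.
\item Then $\widetilde u=u-U$ solves the one-dimensional equation $\partial_t\widetilde u=\bar a^{11}D_{11}\widetilde u+\widetilde f$, with all other second derivatives of $u$ and $U$ placed in $\widetilde f$.
\item It kills $\widetilde f(t,0,x')$ with the corrector $V$ and Lemma~\ref{lem:half-line-boundary}, and only then reflects.
\end{enumerate}
The time-H\"older control of the traces of $D_{1j}u$ and $D^2U$ is what makes this trace admissible data for the half-line lemma.

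Your splitting is also inconsistent. If $u_2$ solves a half-line problem in $x_1$ with $x'$ frozen, then $u-u_2$ does not solve the model problem with forcing $f-\phi$. The terms $\bar a^{ij}D_{ij}u_2$ for $(i,j)\ne(1,1)$ and $\bar\sigma^{ik}D_iu_2$ appear, while $u_2$ is only $C^\alpha$ in $x'$. If instead $u_2$ solves the full model problem, it is neither deterministic nor one-dimensional. You also never say how $g$ is removed; the paper does this first, via $dU=M\Delta U\,\md t+(\bar\sigma^{ik}D_iU+g^k)\,\md w_t^k$ and reflection.

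Two further steps fail as written. In Step~1, after putting $2\bar a^{1j}D_{1j}$ into the divergence part, the whole-space estimate for $\delta_hu$ bounds $[D\delta_hu]_{(\alpha,\alpha/2)}$ by $C|D_{x'}\delta_hu|_{\alpha}+\cdots$ with a constant that is not small. The cross coefficients are of order one and $D_{x'}\delta_hu$ has the same order as the left side. Neither smallness, interpolation, nor differentiating the equation again closes this loop. The paper breaks it with an independent tangential bound, Lemma~\ref{prop:tangential}, proved by the approximation scheme on boundary cylinders without any reflection. Moreover, for $\ell\ge2$ the forcing $\delta_h^\ell f=D_\ell F_h$ can be reflected only if $F_h=0$ on $\{x_1=0\}$. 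The paper handles this with $F^i(t,x)=f^i(t,0,\dots,x_i+x_1,\dots)$, which satisfies $D_1F^i=D_iF^i$ and shifts the trace into the evenly reflected first component.

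Finally, your half-line bound with $A(t,s)=\int_s^t\bar a^{11}$ is false as stated. For $K=\mathrm{erfc}\bigl(x_1/\sqrt{4A(t,s)}\bigr)$ one has $\partial_sK=-\bar a^{11}(s)\partial_{11}K$. So the boundary-term identity holds for $\int_0^t\bar a^{11}(s)\partial_{11}K\,\md s$, not for $\int_0^t\partial_{11}K\,\md s$. Indeed $w_{11}(t,0)=\psi'(t)/\bar a^{11}(t)$, which is not H\"older in $t$ when $\bar a^{11}$ is merely predictable. You need $\bar a^{11}\in C^{\alpha/2}$ in $t$, with $C$ depending on that norm. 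The paper uses the unit-coefficient kernel in Lemma~\ref{lem:half-line-boundary} and puts $(\bar a^{11}-1)D_{11}V$ into the whole-line forcing. However, the datum $b=(\bar a^{11})^{-1}\widetilde f(t,0,x')$ it feeds into that lemma tacitly requires the same time regularity of $\bar a$. In the application this is supplied by Assumption~\ref{ass:coefficients}.
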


The proof of Proposition~\ref{prop:model-main} will be completed at the end of the section. We first collect some auxiliary mixed-norm estimates, then estimate the tangential derivatives and the first normal derivative, and finally treat the second normal derivative by means of a boundary lemma for a one-dimensional heat equation on the half-line.

\subsection{Auxiliary mixed-norm estimates}

For $r>0$ and $x\in\R^n_+$, set
\[
B_r^+(x):=B_r(x)\cap \R^n_+,
\qquad
Q_r^+(t,x):=B_r^+(x)\times(t-r^2,t].
\]
When $(t,x)=(0,0)$, we simply write $B_r^+$ and $Q_r^+$. For $p,q\in[1,\infty]$ and an integer $m\ge0$, we use the mixed-norm notation
\[
L_\omega^qL_t^qW_x^{m,p}(Q)
:=L^q\bigl(\Omega;L^q(I;W^{m,p}(G))\bigr),
\qquad Q = G\times I.
\]

The following mixed-norm estimates are standard consequences of the half-space $L^p$-theory under the compatibility condition; see~\cite[Section~3]{Du2020} for the Dirichlet problem in a half-space and compare also~\cite[Section~3]{DuLiu2019} for the corresponding whole-space estimates.

\begin{lemma}\label{lem:mixed-norm-half-space}
Let $p,q\in(2,\infty)$, and let $Q_T^+=\R^n_+\times(0,T]$. Assume that \eqref{eq:flat-compatibility} holds. Then the following assertions are valid.
\begin{enumerate}[label=\rm(\roman*)]
\item If $f=D_i f^i$ with $f^i\in L_\omega^qL_t^qL_x^p(Q_T^+)$ and $g\in L_\omega^qL_t^qL_x^p(Q_T^+;\elltwo)$, then the problem \eqref{eq:model-half-space}--\eqref{eq:model-half-space-bc} admits a unique solution
\[u\in L_\omega^qL_t^qW_x^{1,p}(Q_T^+)
\]
and
\begin{equation}\label{eq:half-space-W1p}
\|u\|_{L_\omega^qL_t^qW_x^{1,p}(Q_T^+)}
\le C\Bigl(\sum_{i=1}^n\|f^i\|_{L_\omega^qL_t^qL_x^p(Q_T^+)}
+\|g\|_{L_\omega^qL_t^qL_x^p(Q_T^+;\elltwo)}\Bigr).
\end{equation}

\item If $f\in L_\omega^qL_t^qL_x^p(Q_T^+)$ and $g\in L_\omega^qL_t^qW_x^{1,p}(Q_T^+;\elltwo)$ with $g=0$ on $\Sigma_T$, then the problem \eqref{eq:model-half-space}--\eqref{eq:model-half-space-bc} admits a unique solution
\[u\in L_\omega^qL_t^qW_x^{2,p}(Q_T^+)
\]
and
\begin{equation}\label{eq:half-space-W2p}
\|u\|_{L_\omega^qL_t^qW_x^{2,p}(Q_T^+)}
\le C\Bigl(\|f\|_{L_\omega^qL_t^qL_x^p(Q_T^+)}
+\|g\|_{L_\omega^qL_t^qW_x^{1,p}(Q_T^+;\elltwo)}\Bigr).
\end{equation}
\end{enumerate}
Here $C$ depends only on $n$, $p$, $q$, $T$, $\kappa$, and $K$.
\end{lemma}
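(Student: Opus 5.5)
The plan is to reduce both assertions to known results in the mixed-norm $L^q_\omega L^q_t W^{m,p}_x$ setting: the half-space theory of \cite[Section~3]{Du2020} and the whole-space theory of \cite{Krylov1999Analytic,DuLiu2019}. The key structural fact is \eqref{eq:flat-compatibility}. Because $\bar\sigma^{1k}=0$, the noise operator $\bar\sigma^{ik}D_i$ contains only tangential derivatives, and these commute with reflection in $x_1$.

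For the $W^{2,p}$ case (ii), I would first deal with the drift. The operator $\bar a^{ij}(t)D_{ij}$ is not diagonal, so a plain odd extension does not solve the problem. Instead I would invoke the half-space $L^p$-theory of \cite{Du2020}. That theory is built on the compatibility condition and the vanishing trace of $g$, and it gives existence, uniqueness and \eqref{eq:half-space-W2p} for $q=p$. A concrete route to it is as follows.
\begin{itemize}
\item Apply a linear change of variables depending only on $t$ (predictable): $y_1=x_1$, $y'=x'-\int(\cdots)$. This is a shear that preserves $\{x_1=0\}$ and removes the cross terms $\bar a^{1j}$ for $j\ge2$.
\item In the new variables, odd extension in $y_1$ of $f$, $g$ and $u$ turns the problem into a whole-space Cauchy problem. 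The requirement $g=0$ on $\Sigma_T$ is exactly what makes the odd extension of $g$ belong to $W^{1,p}$.
\item Krylov's whole-space $L^p$-theory then gives the estimate, and one transforms back.
\end{itemize}
A shear with random, time-dependent drift may interact badly with the norm in $\omega$. For that reason I would rather use a shear that is deterministic for a.e.\ $\omega$ pathwise; since the coefficients depend only on $t$, the transformation is a bounded isomorphism of $W^{m,p}$ for each $(t,\omega)$ with constants controlled by $\kappa$ and $K$.

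To pass from $q=p$ to general $q>2$ in the mixed norm, I would use Krylov's $L^q_tL^p_x$ theory for equations with coefficients depending only on $t$ \cite{Krylov1999Analytic}. That theory covers exactly the whole-space problem after extension. Uniqueness follows from the estimate by linearity.

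For the divergence case (i), I would again use the shear followed by odd extension in $x_1$. Writing $f=D_if^i$, the odd extension of $f$ corresponds to extending $f^1$ evenly and $f^i$ oddly for $i\ge2$. Each resulting $\tilde f^i$ lies in $L^p$ with comparable norm, and $g$ extends oddly in $L^p$ with no trace condition needed. Krylov's whole-space $W^{1,p}$ estimate for divergence-form free terms then gives \eqref{eq:half-space-W1p}. The restriction to $\R^n_+$ is the solution: it vanishes on $\Sigma_T$ by oddness, and it is unique by the same estimate applied to differences.

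The main obstacle is the non-diagonal structure of $\bar a$, which blocks a direct reflection. If the shear argument turns out to be awkward with respect to the $\omega$-dependence, the fallback is to cite \cite[Section~3]{Du2020} directly, where the half-space estimates under \eqref{eq:flat-compatibility} are proved. In that case the remaining task is only the routine extension from $L^p_{\omega,t,x}$ to $L^q_\omega L^q_tL^p_x$ via the $L^q_tL^p_x$ whole-space results.
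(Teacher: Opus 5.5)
Your primary route has a genuine gap at the shear step. The map that kills the cross terms is $y_1=x_1$, $y'=x'-\beta(t)x_1$ with $\beta_j=\bar a^{1j}/\bar a^{11}$, and this only works for constant coefficients. Here $\bar a$ is merely a predictable process with no regularity in $t$, so the change of variables does not commute with $d_t$. For $v(t,y)=u(t,y_1,y'+\beta(t)y_1)$ one gets, formally,
\[
dv=(du)(t,y_1,y'+\beta(t)y_1)+y_1\,D_{y'}v\cdot d\beta(t).
\]
This is meaningless unless $\beta$ has bounded variation. Even then, the extra first-order term has a coefficient growing linearly in $y_1$, and it is even under $y_1\mapsto -y_1$ because $v$ and $D_{y'}v$ are odd. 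So the transformed problem neither has coefficients depending only on $t$ nor survives odd reflection, and Krylov's whole-space theory cannot be applied. Making the shear pathwise in $\omega$ does not help, because the obstruction is the $t$-dependence, not the $\omega$-dependence. The same objection applies to your argument for (i).

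The ``$\int(\cdots)$'' in your formula suggests you conflated the shear with Krylov's random translation $x'\mapsto x'-\int_0^t\bar\sigma^{\cdot k}\md w_s^k$. That translation is legitimate here: by \eqref{eq:flat-compatibility} it moves only $x'$, so it preserves $\R^n_+$ and all $L^p_x$-norms. But it removes the noise, not the cross terms.

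Your fallback, citing \cite[Section~3]{Du2020}, is essentially all the paper does; it gives no proof and simply refers to \cite[Section~3]{Du2020} and \cite[Section~3]{DuLiu2019}. However, your claim that the passage to $q\neq p$ is then routine ``via the $L^q_tL^p_x$ whole-space results'' runs into the same obstacle. Whole-space estimates reach the half-space only through an extension that commutes with the operator, and the terms $\bar a^{1j}D_{1j}$, $j\ge2$, prevent that.

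A workable scheme runs as follows.
\begin{enumerate}
\item Subtract the solution $U$ of $dU=M\Delta U\md t+(\bar\sigma^{ik}D_iU+g^k)\md w_t^k$ with zero Dirichlet data and $M$ large. Its drift is diagonal and its noise tangential, so odd extension and the whole-space theory apply, exactly as in the proof of Proposition~\ref{prop:model-main}.
\item Since $\bar a$ is independent of $x$, the correction $(\bar a^{ij}-M\delta^{ij})D_{ij}U=D_i\bigl((\bar a^{ij}-M\delta^{ij})D_jU\bigr)$ stays in divergence form. So this step covers (i) as well.
\item Remove the remaining tangential noise by the random translation above. By It\^o--Wentzell and \eqref{eq:parabolicity}, this leaves, for each $\omega$, a deterministic Dirichlet problem in $\R^n_+$ with uniformly elliptic coefficients $\bar a^{ij}-\tfrac12\bar\sigma^{ik}\bar\sigma^{jk}$ that are merely measurable in $t$.
\item For that problem, invoke the deterministic half-space $L_q(L_p)$-theory for time-measurable coefficients, not a reflection argument.
\end{enumerate}
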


By localization and Sobolev embedding, Lemma~\ref{lem:mixed-norm-half-space} yields the following local pointwise bound, which will be used repeatedly below.

\begin{lemma}\label{lem:local-pointwise}
Let $p,q\in(2,\infty)$ satisfy $p>(n+2)q/2$, and let $\theta\in(0,1)$. Suppose that $u$ solves \eqref{eq:model-half-space} in $Q_1^+$ with homogeneous boundary condition on $\{x_1=0\}\cap\partial Q_1^+$. Then
\begin{align*}
|Du|_{0;Q_\theta^+}^{L_\omega^q}
&\le C\Bigl(\|u\|_{L_\omega^qL_t^qL_x^p(Q_1^+)}
+\|f\|_{L_\omega^qL_t^qL_x^p(Q_1^+)}
+\|g\|_{L_\omega^qL_t^qW_x^{1,p}(Q_1^+;\elltwo)}\Bigr),
\end{align*}
where $C$ depends only on $n$, $p$, $q$, $\theta$, $\kappa$, and $K$.
\end{lemma}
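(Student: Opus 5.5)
The plan is a cutoff argument combined with Lemma~\ref{lem:mixed-norm-half-space}(ii) and an anisotropic Sobolev embedding for stochastic processes. Fix $\theta<\theta'<1$ and choose a smooth cutoff $\zeta(x,t)$ with $\zeta=1$ on $Q_\theta^+$ and support in $B_{\theta'}\times(-\theta'^2,0]$. We may take $\zeta$ to vanish near the parabolic boundary, with $t$ shifted so that the initial time $-1$ becomes $0$. Set $v=\zeta u$; it vanishes on $\{x_1=0\}$. Since $\bar a,\bar\sigma$ are independent of $x$, a direct computation gives
\begin{equation*}
dv=\bigl(\bar a^{ij}D_{ij}v+\tilde f\bigr)\md t+\bigl(\bar\sigma^{ik}D_i v+\tilde g^k\bigr)\md w_t^k,
\end{equation*}
with
\begin{equation*}
\tilde f=\zeta f+u\,\zeta_t-2\bar a^{ij}D_i\zeta D_j u-\bar a^{ij}u D_{ij}\zeta,
\qquad
\tilde g^k=\zeta g^k-\bar\sigma^{ik}uD_i\zeta.
\end{equation*}
Note that $\tilde g=0$ on $\{x_1=0\}$, because $u=g=0$ there; flat compatibility is not even needed for this.

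The difficulty is that $\tilde f$ contains $Du$. I would handle this in two passes. In the first pass, apply Lemma~\ref{lem:mixed-norm-half-space}(i) to $v$, writing the $D_j u$ terms in divergence form as $D_j(uD_i\zeta)-uD_{ij}\zeta$. The $\tilde g$ term contains $uD\zeta$, which lies in $L^p$. The remaining zero-order term ($u$ times a smooth function, plus $\zeta f$) is written as $D_1$ of its $x_1$-antiderivative on a bounded set. This gives
\begin{equation*}
\|Du\|_{L_\omega^qL_t^qL_x^p(Q_{\theta'}^+)}\lesssim \|u\|_{L^q_\omega L^q_tL^p_x(Q_1^+)}+\|f\|_{L^q_\omega L^q_tL^p_x(Q_1^+)}+\|g\|_{L^q_\omega L^q_tL^p_x(Q_1^+)}.
\end{equation*}
Rather than the antiderivative trick, one could also apply (ii) directly and absorb $Du$ through the interpolation $\|Dv\|\le\varepsilon\|D^2v\|+C_\varepsilon\|v\|$ together with a standard iteration over shrinking cylinders (Krylov's lemma on absorbing terms). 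I would take that route, since it is cleaner.

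In the second pass, with nested cylinders, apply (ii) to get
\begin{equation*}
\|\zeta u\|_{L_\omega^qL_t^qW_x^{2,p}}\lesssim \|u\|_{L^q_\omega L^q_tL^p_x(Q_1^+)}+\|f\|_{L^q_\omega L^q_tL^p_x(Q_1^+)}+\|g\|_{L^q_\omega L^q_tW^{1,p}_x(Q_1^+)}.
\end{equation*}
Extending $v$ by zero in $t<0$, it also satisfies the equation in the It\^o sense with drift in $L^qL^p$ and diffusion in $L^qW^{1,p}$.

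The key step is the embedding. A process $v\in L^q_\omega L^q_tW^{2,p}_x$ whose stochastic differential has drift in $L^q_\omega L^q_t L^p_x$ and diffusion in $L^q_\omega L^q_tW^{1,p}_x(\elltwo)$ has $Dv$ bounded in $L^q_\omega$, uniformly in $(x,t)$. This is Krylov's embedding for stochastic Banach spaces $\mathcal H^{2}_{p}$, giving $v\in C^{1/2-1/q}_t(W^{1,p}_x)$ in $L^q_\omega$ norms. For the pointwise $L^q_\omega$ bound one only needs $W^{2,p}_x\subset C^1_x$ for each fixed $t$, uniformly in $t$. That is, the estimate requires $\sup_t E\|v(t)\|_{W^{2,p}}^q$ rather than the $L^q_t$ integral. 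The $\sup_t$ bound is obtained from the It\^o formula for $\|D^2 v\|_{L^p}^p$, or from the standard $\mathcal H$-space embedding at lower order combined with interpolation. The condition $p>(n+2)q/2$ is what makes the anisotropic mixed-norm embedding into $C^{0}$ for $Dv$ work, trading time integrability for space. I expect this embedding step, in particular verifying that the exponent condition suffices with the half-space boundary, to be the main obstacle.

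I would resolve it by extending $v$ oddly in $x_1$. This is legitimate because $v=0$ on $x_1=0$ and the embedding is purely functional. The whole-space embedding, as in \cite[Section~3]{DuLiu2019}, then applies. Finally $|Du|_{0;Q_\theta^+}\le|Dv|_0$, which concludes the proof.
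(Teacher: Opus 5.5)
\textbf{Overall route.} Your approach is the same as the paper's sketch: cut off, apply Lemma~\ref{lem:mixed-norm-half-space}(ii) after absorbing the $Du$ terms, extend oddly, and invoke the stochastic parabolic embedding of \cite{DuLiu2019}. The cutoff algebra, the absorption and the odd extension are fine, with two remarks. First, the odd extension of the full diffusion coefficient $\bar\sigma^{ik}D_iv+\tilde g^k$ stays in $W^{1,p}_x$ only because $\bar\sigma^{1k}=0$, since $D_1v\neq0$ on $\{x_1=0\}$; so compatibility is used there. Second, you are implicitly assuming $g=0$ on $\{x_1=0\}$, as Lemma~\ref{lem:mixed-norm-half-space}(ii) requires.

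\textbf{The gap is the embedding step}, which is the entire content of the lemma. The bound you say is needed, $\sup_t\E\|v(t)\|_{W^{2,p}}^q$, is not available. With drift only in $L^q_tL^p_x$ and diffusion only in $L^q_tW^{1,p}_x$, the time traces of $v$ have only $2-2/q$ derivatives in $L^p$ (the trace space is $B^{2-2/q}_{p,q}$). An It\^o formula for $\|D^2v\|_{L^p}^p$ would require $f\in W^{1,p}$ and $g\in W^{2,p}$. The $\beta=1$ embedding you quote, $v\in C^{1/2-1/q}_t(W^{1,p}_x)$, controls $v$ but not $Dv$. What does work is Krylov's embedding with $\beta\in(2/q,1]$, which gives $\sup_t\E\|v(t)\|^q_{H^{2-\beta}_p}\le C\|v\|^q_{\mathcal H^2_{p,q}}$. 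It must be followed by $H^{2-\beta}_p\subset C^1$, which needs $1-\beta>n/p$. So the argument closes exactly when $n/p+2/q<1$.

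\textbf{The stated exponent condition does not suffice.} You assert, without checking, that $p>(n+2)q/2$ is what makes this work. In fact it guarantees $n/p+2/q<1$ only when $q\ge4(n+1)/(n+2)$ (e.g.\ for $q\ge4$). It fails, for instance, for $n=1$, $q=2.1$, $p=3.2$. In that range the inequality itself is false, already for the deterministic heat equation with $\bar\sigma=0$, $g=0$:
\begin{itemize}
\item Away from $\{x_1=0\}$, $Du(x_0,t_0)$ is, up to a smooth Dirichlet correction, the pairing of $f$ with $\nabla G(t_0-s,x_0-y)$.
\item One has $\|\nabla G(s,\cdot)\|_{L^{p'}}\sim s^{-1/2-n/(2p)}$, which is not in $L^{q'}(0,1)$ when $n/p+2/q\ge1$.
\item For the zero-data solution, $\|u\|_{L^q_tL^p_x}\lesssim\|f\|_{L^q_tL^p_x}$.
\end{itemize}
So by duality no constant can bound $|Du(x_0,t_0)|$ by the right-hand side.

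\textbf{What is missing} is the correct exponent condition $n/p+2/q<1$. Under it, your cutoff/odd-extension argument goes through with any $\beta\in(2/q,1-n/p)$. The paper's one-line proof invokes the same embedding under the same stated condition, so the defect lies in the hypothesis. You were right to flag this step as the main obstacle, but the uniform-in-time $W^{2,p}$ route you sketched would not resolve it.
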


\begin{proof}
Apply Lemma~\ref{lem:mixed-norm-half-space}(ii) to a cut-off version of $u$ supported in $Q_1^+$. The desired pointwise bound follows from the mixed-norm estimates together with the standard stochastic parabolic embedding used in~\cite[Section~3]{DuLiu2019}, under the condition $p>(n+2)q/2$. We omit the routine details.
\end{proof}

\subsection{H\"older estimate for tangential derivatives}

In this subsection we estimate the tangential derivatives $D_{x'}u=(D_2u,\dots,D_nu)$. 
This part of the argument is close in spirit to the corresponding interior estimate in the whole-space case~\cite[Section~4]{DuLiu2019}, but the localization has to respect the boundary.

\begin{lemma}\label{prop:tangential}
Let $u$ be a quasi-classical solution of \eqref{eq:model-half-space}--\eqref{eq:model-half-space-bc} with $g\equiv0$. Assume that
\[
f=D_i f^i,
\qquad
f^i\in C_x^{\alpha}(\bar Q_T^+;L_\omega^\gamma),
\qquad i=1,\dots,n.
\]
Then
\begin{equation}\label{eq:tangential-estimate}
|D_{x'}u|_{\alpha;Q_T^+}^{L_\omega^\gamma}
\le C\Bigl(
|u|_{0;Q_T^+}^{L_\omega^\gamma}
+\sum_{i=1}^n |f^i|_{\alpha;Q_T^+}^{L_\omega^\gamma}
\Bigr),
\end{equation}
where $C$ depends only on $n$, $\alpha$, $\gamma$, $\kappa$, and $K$.
\end{lemma}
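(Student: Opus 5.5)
We follow the Campanato-type localization and approximation scheme of \cite[Section~4]{DuLiu2019}, adapted to boundary cylinders $Q_r^+(t_0,x_0)$. The target is to bound the spatial H\"older quotient of $D_{x'}u$ pointwise in time. Fix $t_0\in(0,T]$, a point $x_0\in\bar{\R}^n_+$, and a radius $r$. We distinguish interior balls, with $r<x_{0,1}/2$, and boundary balls. For interior balls the whole-space estimate of \cite{DuLiu2019} applies directly. For boundary balls we may translate $x_0$ to a point on $\{x_1=0\}$ at the cost of doubling $r$. The key structural fact is that tangential translations $x\mapsto x+he'$, with $e'\perp e_1$, preserve $\R^n_+$, the Dirichlet condition, and the model equation with its $x$-independent coefficients. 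Hence tangential difference quotients and tangential derivatives of $u$ again solve \eqref{eq:model-half-space}--\eqref{eq:model-half-space-bc}. The one exception is $D_1u$, which does not vanish on the boundary; this is why the lemma is restricted to $D_{x'}u$.

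\textbf{Step 1 (freezing).} In $Q_{2r}^+(t_0,x_0)$ we split $u=v+w$.
\begin{itemize}
\item $v$ solves the model equation with divergence data $D_i(f^i-f^i(t,x_0))$, zero initial data on $t=t_0-4r^2$, and the Dirichlet condition on $\{x_1=0\}$, with a cut-off making it global.
\item $w$ solves the homogeneous model equation ($f=0$, $g=0$) in $Q_{2r}^+$ with $w=0$ on the flat boundary. This works because $D_if^i(t,x_0)=0$, since the frozen data are constant in $x$.
\end{itemize}

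\textbf{Step 2 (bound on $v$).} By Lemma~\ref{lem:mixed-norm-half-space}(i) after scaling, $\|Dv\|$ in $L_\omega^qL_t^qL_x^p$ on the cylinder is controlled by $r^{\alpha+(n+2)/p}[f]_\alpha$.

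\textbf{Step 3 (bound on $w$).} For $w$ we need a Lipschitz-type bound for $D_{x'}w$. Since $D_{x'}^m w$ again solves the homogeneous problem with zero Dirichlet data, Lemma~\ref{lem:local-pointwise} applied iteratively, with Caccioppoli-type $W^{1,p}$ bounds from Lemma~\ref{lem:mixed-norm-half-space}, gives
\[
\sup_{Q_{r}^+}|D D_{x'}w|\le Cr^{-1}\Bigl(\fint |D_{x'}w|^q\Bigr)^{1/q}.
\]
This is the step where we need tangential regularity only, because $D_1$ commutes with the equation but not with the boundary condition.

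\textbf{Step 4 (oscillation and iteration).} Combining the two bounds gives a Campanato oscillation estimate: for $\rho<r$,
\[
\operatorname{osc}_{Q_\rho^+}D_{x'}u\lesssim (\rho/r)\,\operatorname{osc}_{Q_r^+}D_{x'}u+r^\alpha[f]_\alpha .
\]
Iterating this inequality yields $\|D_{x'}u(t_0,x)-D_{x'}u(t_0,y)\|_{L^\gamma_\omega}\le C|x-y|^\alpha(\ldots)$.

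The conversion between $L^q$-averages and the $L_\omega^\gamma$-sup norms is done as in \cite{DuLiu2019}: choose $q=\gamma$ and large $p$, using Lemma~\ref{lem:local-pointwise} with $p>(n+2)q/2$. Where a time-dependent cut-off is needed, we may instead first work with smooth approximations of $f^i$ and pass to the limit.

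\textbf{Lower-order terms and the main obstacle.} The terms $|u|_0$ and $|f^i|_0$ enter through the cut-off error terms and through large $r$, where the interpolation $[D_{x'}u]_\alpha\le\varepsilon[\cdot]+C_\varepsilon|u|_0$ absorbs them. I expect the main obstacle to be Step 3 on boundary cylinders with random coefficients. We cannot use stochastic flows, and the domain of the cutoff interacts with the stochastic integral: the commutator $\bar\sigma^{ik}D_i\zeta\,w$ produces a noise term, which is fine because $\zeta w=0$ on $x_1=0$. The delicate point is obtaining the $r^{-1}$ scaling uniformly in $\omega$ in $L_\omega^\gamma$. I would prove it first at $r=1$ via Lemma~\ref{lem:local-pointwise} and then use parabolic scaling, since the model equation is invariant under $(t,x)\mapsto(r^2t,rx)$ with time-shifted Wiener processes.
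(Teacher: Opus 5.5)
Your proposal is sound in outline but takes a different route from the paper. You run a Campanato-type iteration of integral oscillations of $D_{x'}u$ over shrinking cylinders. The paper instead uses the pointwise approximation (telescoping) scheme of \cite{DuLiu2019}. It fixes two points $Y_1=(s,y_1)$, $Y_2=(s,y_2)$ at the \emph{same} time, with $d=|y_1-y_2|\sim 2^{-N_0}$. It solves the homogeneous model equation on dyadic boundary cylinders $Q_N^+(Y_j)$ with parabolic boundary data $u$, the two families coinciding for $N<N_0$. Lemma~\ref{lem:mixed-norm-half-space}(i) is used only for the $L^p$ smallness of $u-u_{j,N}$. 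Lemma~\ref{lem:local-pointwise} then turns this into pointwise decay $|D_{x'}(u_{j,N}-u_{j,N+1})|\lesssim 2^{-\alpha N}$ of the homogeneous differences. The paper telescopes and bounds the cross term by the spatial mean value theorem plus Cauchy estimates for $D_xD_{x'}$.

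The ingredients are the same as yours: frozen divergence data, the scaled $W^{1,p}$ bound, and Lipschitz bounds for tangential derivatives of homogeneous solutions, which preserve the Dirichlet condition. What the paper's architecture buys is that everything is compared at the single time $s$ in pointwise $L^\gamma_\omega$ norms. It therefore needs no time increments, no subtracted constants, and no Campanato characterization. Its only pointwise information on the inhomogeneous remainder is the convergence $D_{x'}u_{j,N}(Y_j)\to D_{x'}u(Y_j)$ at an interior point, taken from the interior theory.

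Your route is the more familiar one and handles boundary cylinders cleanly. To close it you should do four things.
\begin{enumerate}[label=(\roman*)]
\item On boundary-centred cylinders, iterate $\E\int_{Q_\rho^+}|D_{x'}u|^\gamma$ itself rather than an $\inf_c$-oscillation. This is legitimate because $D_{x'}u=0$ on $\{x_1=0\}$, and necessary because $D_{x'}w-c$ violates the Dirichlet condition required by Lemma~\ref{lem:local-pointwise}.
\item Iterate unnormalized (mixed-norm) integrals and apply the standard iteration lemma. With averages, the error term in your Step~4 carries an extra factor $(r/\rho)^{n/p+2/\gamma}$.
\item On interior cylinders, either import the interior estimate of \cite{DuLiu2019} in a form compatible with integral quantities, or subtract only $\mathcal F_{t_0-r^2}$-measurable constants so that $D_{x'}w-c$ remains an adapted solution. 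In the latter case you must also control time increments of $D_{x'}w$ through bounds on $D^2D_{x'}w$ and the Burkholder--Davis--Gundy inequality.
\item Finish with a Campanato characterization for $L_\omega^\gamma$-valued functions. This in fact yields parabolic H\"older continuity of $D_{x'}u$, which is more than the lemma asserts.
\end{enumerate}
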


\begin{proof}
We follow the approximation scheme from the proof of the interior estimate for the model equation in~\cite[Section~4]{DuLiu2019}, but we adapt it to boundary cylinders. Only the modifications caused by the flat boundary are discussed below.

Fix $p>(n+2)\gamma/2$. By a standard covering and scaling argument, it suffices to prove the estimate in a unit boundary cylinder. More precisely, it is enough to show that for any $s\in(0,T]$ and $y_1,y_2\in B_{1/4}^+$ with
\[
Y_j=(s,y_j),\qquad j=1,2,
\]
the estimate
\begin{equation}\label{eq:tangential-local-goal}
\|D_{x'}u(Y_1)-D_{x'}u(Y_2)\|_{L_\omega^\gamma}
\le C |y_1-y_2|^{\alpha}
\Bigl(
|u|_{0;Q_T^+}^{L_\omega^\gamma}
+\sum_{i=1}^n |f^i|_{\alpha;Q_T^+}^{L_\omega^\gamma}
\Bigr)
\end{equation}
holds. In the boundary case $y_{j,1}=0$, the same estimate follows from the continuity of $D_{x'}u$ up to the boundary once it has been proved for interior points.

Set
\(d:=|y_1-y_2|\le 1/4\),
and choose $N_0\in\mathbb N$ so that
\(
2^{-(N_0+2)}\le d<2^{-(N_0+1)}
\).
For $Y=(s,y)$ and $N\ge0$, let
\[
Q_N^+(Y):=B_{2^{-N}}^+(y)\times ((s-2^{-2N})\vee0,s].
\]
The truncation at $t=0$ in the definition of $Q_N^+(Y)$ causes no additional difficulty because the problem has zero initial condition and the same local estimates hold on truncated cylinders by the standard localization argument.

For $j=1,2$ and $N\ge N_0$, let $u_{j,N}$ denote the solution of
\begin{equation}\label{eq:tangential-approx}
\begin{cases}
 d u_{j,N}=\bar a^{rs}(t)D_{rs}u_{j,N}\md t+\bar\sigma^{rk}(t)D_ru_{j,N}\md w_t^k
 &\text{in }Q_N^+(Y_j),\\[2mm]
 u_{j,N}=u &\text{on }\partial_pQ_N^+(Y_j).
\end{cases}
\end{equation}
For $N<N_0$, we let $u_{1,N}=u_{2,N}$ be the corresponding solution with $Y_j$ replaced by $Y_1$. Thus the two approximation sequences coincide on all scales larger than the separation of $Y_1$ and $Y_2$.

Set $w_{j,N}:=u-u_{j,N}$. Since $u$ solves \eqref{eq:model-half-space} with $g\equiv0$, the function $w_{j,N}$ satisfies
\[
dw_{j,N}=\Bigl(\bar a^{rs}(t)D_{rs}w_{j,N}+D_i\bigl(f^i(t,x)-f^i(t,y_j)\bigr)\Bigr)dt
+\bar\sigma^{rk}(t)D_rw_{j,N}\md w_t^k
\]
in $Q_N^+(Y_j)$, with zero parabolic boundary data. By the scaled form of Lemma~\ref{lem:mixed-norm-half-space}(i),
\begin{equation}\label{eq:tangential-w-bound}
\|w_{j,N}\|_{L_\omega^\gamma L_t^\gamma L_x^p(Q_N^+(Y_j))}
\le C 2^{-N(1+\alpha+n/p+2/\gamma)}
\sum_{i=1}^n |f^i|_{\alpha;Q_T^+}^{L_\omega^\gamma}.
\end{equation}
Indeed,
\[
\|f^i(\cdot,\cdot)-f^i(\cdot,y_j)\|_{L_\omega^\gamma L_t^\gamma L_x^p(Q_N^+(Y_j))}
\le C 2^{-N(\alpha+n/p+2/\gamma)}|f^i|_{\alpha;Q_T^+}^{L_\omega^\gamma},
\]
and the additional factor $2^{-N}$ in \eqref{eq:tangential-w-bound} comes from scaling Lemma~\ref{lem:mixed-norm-half-space}(i) from the unit cylinder to radius $2^{-N}$.

Now let
\[
h_{j,N}:=u_{j,N}-u_{j,N+1}, \qquad N\ge 0.
\]
Then $h_{j,N}$ solves the homogeneous model equation in $Q_{N+1}^+(Y_j)$. By the scaled form of Lemma~\ref{lem:local-pointwise},
\begin{equation}\label{eq:tangential-difference}
|D_{x'}h_{j,N}|_{0;Q_{N+2}^+(Y_j)}^{L_\omega^\gamma}
\le C 2^{-\alpha N}
\sum_{i=1}^n |f^i|_{\alpha;Q_T^+}^{L_\omega^\gamma}.
\end{equation}
This is the basic decay estimate for the approximation sequence.

For each fixed $j$, since $y_j\in\R^n_+$, there exists $N_j$ such that $Q_N^+(Y_j)$ does not meet the flat boundary whenever $N\ge N_j$. For such $N$, the point $Y_j$ is an interior point of $Q_N^+(Y_j)$. Applying the corresponding interior estimate from~\cite[Section~4]{DuLiu2019} to $w_{j,N}$, together with \eqref{eq:tangential-w-bound}, we obtain
\begin{equation}\label{eq:tangential-convergence}
\|D_{x'}u_{j,N}(Y_j)-D_{x'}u(Y_j)\|_{L_\omega^\gamma}
\le C 2^{-\alpha N}
\sum_{i=1}^n |f^i|_{\alpha;Q_T^+}^{L_\omega^\gamma},
\qquad N\ge N_j.
\end{equation}
In particular, $D_{x'}u_{j,N}(Y_j)\to D_{x'}u(Y_j)$ in $L_\omega^\gamma$ as $N\to\infty$.

We now decompose via the triangle inequality:
\begin{align}
\|D_{x'}u(Y_1)-D_{x'}u(Y_2)\|_{L_\omega^\gamma}
&\le \bigl\|D_{x'}u(Y_1)-D_{x'}u_{1,N_0}(Y_1)\bigr\|_{L_\omega^\gamma}
  +\bigl\|D_{x'}u(Y_2)-D_{x'}u_{2,N_0}(Y_2)\bigr\|_{L_\omega^\gamma} \notag\\
&\quad+\bigl\|D_{x'}u_{1,N_0}(Y_1)-D_{x'}u_{1,N_0}(Y_2)\bigr\|_{L_\omega^\gamma} \notag\\
&\quad+\bigl\|D_{x'}u_{1,N_0}(Y_2)-D_{x'}u_{2,N_0}(Y_2)\bigr\|_{L_\omega^\gamma} \notag\\
&=:J_1+J_2+J_3+J_4.
\label{eq:tangential-J}
\end{align}
Using \eqref{eq:tangential-convergence} and then telescoping by means of \eqref{eq:tangential-difference}, we obtain
\begin{align*}
J_1+J_2
\le \sum_{j=1}^2 \sum_{N\ge N_0}
|D_{x'}h_{j,N}|_{0;Q_{N+2}^+(Y_j)}^{L_\omega^\gamma}
\le C\sum_{N\ge N_0}2^{-\alpha N}
\sum_{i=1}^n |f^i|_{\alpha;Q_T^+}^{L_\omega^\gamma}
\le C d^{\alpha}
\sum_{i=1}^n |f^i|_{\alpha;Q_T^+}^{L_\omega^\gamma}.
\end{align*}
For $J_4$, note that $u_{1,N_0-1}=u_{2,N_0-1}$ by construction. Hence
\begin{align*}
J_4
\le \sum_{j=1}^2
|D_{x'}(u_{j,N_0}-u_{j,N_0-1})(Y_2)|_{L_\omega^\gamma}
\le \sum_{j=1}^2|D_{x'}h_{j,N_0-1}|_{0;Q_{N_0+1}^+(Y_j)}^{L_\omega^\gamma}
\le C d^{\alpha}
\sum_{i=1}^n |f^i|_{\alpha;Q_T^+}^{L_\omega^\gamma}.
\end{align*}

It remains to estimate $J_3$. Note that $d<2^{-(N_0+1)}$, and therefore the spatial segment joining $y_1$ and $y_2$ is contained in $B_{2^{-(N_0+1)}}^+(y_1)$. Hence the mean-value theorem can be applied using estimates on the common cylinder $Q_{N_0+1}^+(Y_1)$.

We separate the large-scale homogeneous part from the small-scale corrections.
Write
\[
u_{1,N_0}=u_{1,0}+\sum_{N=1}^{N_0}\bigl(u_{1,N}-u_{1,N-1}\bigr).
\]

\textit{The term $u_{1,0}$.} The function $u_{1,0}$ solves the homogeneous model equation in $Q_0^+(Y_1)$, a cylinder of unit size. Writing $u_{1,0}=u-(u-u_{1,0})$ and applying the standard difference-quotient argument together with Lemma~\ref{lem:local-pointwise} (compare with~\cite[Section~4]{DuLiu2019}), we obtain on the smaller cylinder $Q_{N_0+1}^+(Y_1)\subset Q_{1/2}^+(Y_1)$,
\begin{equation}\label{eq:tangential-u10}
|D_xD_{x'}u_{1,0}|_{0;Q_{N_0+1}^+(Y_1)}^{L_\omega^\gamma}
\le C\Bigl(
|u|_{0;Q_T^+}^{L_\omega^\gamma}
+\sum_{i=1}^n |f^i|_{\alpha;Q_T^+}^{L_\omega^\gamma}
\Bigr).
\end{equation}

\textit{The increments $u_{1,N}-u_{1,N-1}$, $N=1,\dots,N_0$.} Note that $u_{1,N}-u_{1,N-1}=-h_{1,N-1}$. The function $h_{1,N-1}$ solves the homogeneous model equation in $Q_N^+(Y_1)$. By the decay estimate \eqref{eq:tangential-difference} applied with index $N-1$, we have the bound on $Q_{(N-1)+2}^+(Y_1)=Q_{N+1}^+(Y_1)$, namely
\[
|D_{x'}(u_{1,N}-u_{1,N-1})|_{0;Q_{N+1}^+(Y_1)}^{L_\omega^\gamma}
\le C 2^{-\alpha N}
\sum_{i=1}^n |f^i|_{\alpha;Q_T^+}^{L_\omega^\gamma}.
\]
Applying a standard Cauchy estimate for the homogeneous parabolic equation to pass from first derivatives on $Q_{N+1}^+(Y_1)$ to second derivatives on the smaller cylinder $Q_{N_0+1}^+(Y_1)\subset Q_{N+1}^+(Y_1)$ yields
\begin{equation}\label{eq:tangential-second-derivative}
|D_xD_{x'}(u_{1,N}-u_{1,N-1})|_{0;Q_{N_0+1}^+(Y_1)}^{L_\omega^\gamma}
\le C 2^N\,|D_{x'}(u_{1,N}-u_{1,N-1})|_{0;Q_{N+1}^+(Y_1)}^{L_\omega^\gamma}
\le C 2^{(1-\alpha)N}
\sum_{i=1}^n |f^i|_{\alpha;Q_T^+}^{L_\omega^\gamma}.
\end{equation}

Using the mean-value theorem along the line segment joining $y_1$ and $y_2$ (which lies inside $Q_{N_0+1}^+(Y_1)$), together with \eqref{eq:tangential-u10} and \eqref{eq:tangential-second-derivative}, we obtain
\begin{align*}
J_3
&\le d\,|D_xD_{x'}u_{1,0}|_{0;Q_{N_0+1}^+(Y_1)}^{L_\omega^\gamma}
 +d\sum_{N=1}^{N_0}|D_xD_{x'}(u_{1,N}-u_{1,N-1})|_{0;Q_{N_0+1}^+(Y_1)}^{L_\omega^\gamma}\\
&\le C d\Bigl(
|u|_{0;Q_T^+}^{L_\omega^\gamma}
+\sum_{i=1}^n |f^i|_{\alpha;Q_T^+}^{L_\omega^\gamma}
\Bigr)
 + C d\sum_{N=1}^{N_0}2^{(1-\alpha)N}
\sum_{i=1}^n |f^i|_{\alpha;Q_T^+}^{L_\omega^\gamma}\\
&\le C d^{\alpha}
\Bigl(
|u|_{0;Q_T^+}^{L_\omega^\gamma}
+\sum_{i=1}^n |f^i|_{\alpha;Q_T^+}^{L_\omega^\gamma}
\Bigr),
\end{align*}
where in the last step we used $d\sim 2^{-N_0}$ and $d\le1$.

Combining the estimates for $J_1$--$J_4$ in \eqref{eq:tangential-J}, we arrive at \eqref{eq:tangential-local-goal}. The desired estimate \eqref{eq:tangential-estimate} follows by the standard covering and scaling argument.\qedhere
\end{proof}

\subsection{H\"older estimate for the first normal derivative}

We next estimate the first derivative in the normal direction. In contrast to the tangential derivatives, this requires a suitable reformulation of the equation before one can appeal to a whole-space estimate.

The main idea is to reduce the problem to a whole-space equation by a careful extension argument.
The difficulty is twofold. 
First, since the second-order operator is not diagonal, a direct odd extension is not available.
Second, in the present stochastic H\"older framework, it is not convenient to use a stochastic-flow transformation to remove the stochastic first-order part (in the spirit of Krylov's analytic approach; see~\cite{Krylov1999Analytic}).
Indeed, our H\"older norms are $L_\omega^\gamma$-valued, with the expectation taken before the H\"older quotient, and a random change of variables would modify these norms in a way that is not transparent.
We therefore proceed differently: we first rewrite the equation in a form adapted to the flat boundary, then extend it across $\{x_1=0\}$ by a suitable odd/even extension, and finally apply the whole-space estimate.

\begin{lemma}\label{prop:first-normal}
Let $u$ be a quasi-classical solution of \eqref{eq:model-half-space}--\eqref{eq:model-half-space-bc} with $g\equiv0$. Assume that
\[
f=D_i f^i,
\qquad
f^i\in C_x^{\alpha}(\bar Q_T^+;L_\omega^\gamma).
\]
Then
\begin{equation}\label{eq:first-normal-estimate}
|D_1u|_{(\alpha,\alpha/2);Q_T^+}^{L_\omega^\gamma}
\le C\Bigl(
|u|_{0;Q_T^+}^{L_\omega^\gamma}
+\sum_{i=1}^n |f^i|_{\alpha;Q_T^+}^{L_\omega^\gamma}
\Bigr),
\end{equation}
where $C$ depends only on $n$, $\alpha$, $\gamma$, $\kappa$, and $K$.
\end{lemma}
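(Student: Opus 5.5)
We will reduce the estimate to the whole-space $C^{1+\alpha}$-type theory of~\cite{DuLiu2019} for divergence-form model equations with data $f=D_if^i$, after rewriting the equation so that an odd/even extension across $\{x_1=0\}$ is possible. The key observation is that, thanks to \eqref{eq:flat-compatibility}, the noise term $\bar\sigma^{rk}D_ru$ contains only tangential derivatives $D_r$, $r\ge2$, which commute with the reflection $x_1\mapsto -x_1$. The obstruction to odd extension is therefore only the mixed drift terms $\bar a^{1j}D_{1j}u$, $j\ge2$.

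\textbf{Step 1: Reformulation.} We move the mixed terms into the divergence-form free term. We write
\[
2\bar a^{1j}D_{1j}u=D_1\bigl(2\bar a^{1j}D_ju\bigr),\qquad j\ge2,
\]
and regard $F^1:=f^1+2\sum_{j\ge2}\bar a^{1j}D_ju$ as a new first component of the forcing, with $F^i=f^i$ for $i\ge2$. Then $u$ solves
\[
du=\Bigl(\bar a^{11}D_{11}u+\sum_{i,j\ge2}\bar a^{ij}D_{ij}u+D_iF^i\Bigr)\md t+\sum_{r\ge2}\bar\sigma^{rk}D_ru\,\md w^k_t .
\]
The parabolicity of this decoupled operator still holds, because the $\sigma$-part involves only $\xi'$. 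The quantity $|D_{x'}u|_{\alpha}$ entering $F^1$ is already controlled by Lemma~\ref{prop:tangential}, so
\[
|F^1|_{\alpha;Q_T^+}^{L^\gamma_\omega}\le C\bigl(|u|_0+\textstyle\sum_i|f^i|_\alpha\bigr).
\]

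\textbf{Step 2: Extension.} We extend $u$ oddly in $x_1$. The operator $\bar a^{11}D_{11}+\sum_{i,j\ge2}\bar a^{ij}D_{ij}$ and the noise $\sum_{r\ge2}\bar\sigma^{rk}D_r$ preserve oddness. For the forcing, $D_1F^1$ should be odd, so we extend $F^1$ evenly, while $D_iF^i$ for $i\ge2$ should be odd, so we extend $F^i$ oddly. The odd extension of $F^i$, $i\ge2$, is not Hölder across $x_1=0$ unless $F^i$ vanishes there. To avoid this, we integrate by parts once more: write $F^i(x)=\bigl(F^i(x)-F^i(0,x')\bigr)+F^i(0,x')$, and handle the trace part as $D_i\bigl(F^i(0,x')\bigr)=D_1\bigl(x_1D_iF^i(0,x')\bigr)$. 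That rewriting needs derivatives, so an alternative is needed. We therefore plan instead to subtract the solution $v$ of the half-space problem with forcing $\sum_{i\ge2}D_iF^i$. Since this forcing has no $x_1$-divergence, $v$ solves, for each $x_1$-slice, a tangential equation and is estimated tangentially. Failing that, we accept that the odd extension of $F^i$ has a jump of size bounded by $|F^i|_0$, and replace it by $\operatorname{sign}(x_1)\bigl(F^i-F^i(0,x')\bigr)$ plus the even-extended correction term described above. Step 2 is where we expect the real difficulty; the first attempt will be to exploit $D_iF^i(0,x')=D_{x'}$-derivatives, which are allowed in the whole-space divergence framework with the tangential structure.

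\textbf{Step 3: Whole-space estimate and conclusion.} The extended $\tilde u$ solves a whole-space model equation in $\R^n\times(0,T]$ with divergence data in $C^\alpha_x$. The whole-space estimate of~\cite{DuLiu2019} for $f=D_iF^i$ gives $|D\tilde u|_{(\alpha,\alpha/2)}\le C(|\tilde u|_0+\sum_i|F^i|_\alpha)$, where the time-Hölder part comes from the parabolic $C^{1+\alpha,(1+\alpha)/2}$ bound. Restricting to $Q_T^+$ and inserting the bound from Step 1 yields \eqref{eq:first-normal-estimate}.
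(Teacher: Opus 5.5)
Your Steps~1 and~3 agree with the paper, and you correctly identify the obstruction. The components $f^i$, $i\ge2$, have nonzero trace on $\{x_1=0\}$, so their odd extensions are not in $C^\alpha_x$, and the whole-space estimate of \cite{DuLiu2019} cannot be applied. Step~2, however, never removes this obstruction, so as written the proposal does not prove the lemma. None of your fallback options works:
\begin{itemize}
\item Subtracting the half-space solution $v$ with forcing $\sum_{i\ge2}D_iF^i$ does not help. The operator still contains $\bar a^{11}D_{11}$, so $v$ does not decouple into $x_1$-slices. Bounding $D_1v$ in $C^\alpha$ is exactly the original problem with $f^1=0$.
\item Accepting a jump in the odd extension would require a transmission-type whole-space estimate for piecewise H\"older divergence data. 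That estimate is not in \cite{DuLiu2019}, and you do not prove it.
\item The rewriting $D_i\bigl(F^i(0,x')\bigr)=D_1\bigl(x_1D_iF^i(0,x')\bigr)$ needs a tangential derivative of a merely $C^\alpha$ trace, as you note yourself.
\end{itemize}

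The missing idea is a derivative-free way to convert the tangential divergence of the trace into a normal divergence. Splitting $f^i$ into $f^i-f^i(t,0,x')$ plus its trace is the right start. The trace must then be extended into $\{x_1>0\}$ along a shear, not constantly in $x_1$. For $i\ge2$ set
\[
F^i(t,x):=f^i(t,0,x'+x_1e_i).
\]
This depends on $(x_1,x_i)$ only through $x_1+x_i$, hence $D_1F^i=D_iF^i$ in the sense of distributions. Define
\[
\widetilde f^1:=f^1+2\sum_{j\ge2}\bar a^{1j}D_ju+\sum_{j\ge2}F^j,\qquad \widetilde f^i:=f^i-F^i\quad(i\ge2).
\]
Then three things hold:
\begin{itemize}
\item $\sum_{i=1}^n D_i\widetilde f^i=\sum_{i=1}^n D_if^i+2\sum_{j\ge2}\bar a^{1j}D_{1j}u$;
\item $\widetilde f^i(t,0,x')=0$ for $i\ge2$;
\item $|F^i|_{\alpha;Q_T^+}^{L_\omega^\gamma}\le C|f^i|_{\alpha;Q_T^+}^{L_\omega^\gamma}$, because $F^i$ is the trace composed with an affine map.
\end{itemize}
Now the even extension of $\widetilde f^1$ and the odd extensions of $\widetilde f^i$, $i\ge2$, all lie in $C^\alpha_x$. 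Your Steps~2--3 then go through: apply the whole-space estimate, and control $D_{x'}u$ in $\widetilde f^1$ by Lemma~\ref{prop:tangential}.
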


\begin{proof}
We divide the proof into three steps.

\medskip
\textit{Step 1. A reformulation adapted to the flat boundary.}
For $i=2,\dots,n$, define
\[
F^i(t,x):=f^i(t,0,x_2,\dots,x_{i-1},x_i+x_1,x_{i+1},\dots,x_n),
\qquad (t,x)\in \bar Q_T^+,
\]
and set
\begin{equation}\label{eq:first-normal-forcing}
\widetilde f^1:=f^1+2\sum_{i=2}^n \bar a^{1i}D_i u+\sum_{i=2}^n F^i,
\qquad
\widetilde f^i:=f^i-F^i,
\quad i=2,\dots,n.
\end{equation}
Since $F^i$ depends on $x_1$ and $x_i$ only through the combination $x_i+x_1$, one has
\begin{equation}\label{eq:F-identity}
D_1F^i=D_iF^i,
\qquad i=2,\dots,n.
\end{equation}
Using \eqref{eq:F-identity}, the symmetry $\bar a^{1i}=\bar a^{i1}$, we compute
\begin{align*}
D_i\widetilde f^i
&=D_1f^1+2\sum_{i=2}^n \bar a^{1i}D_{1i}u+\sum_{i=2}^n D_1F^i
+\sum_{i=2}^n (D_if^i-D_iF^i) \\
&=D_i f^i+2\sum_{i=2}^n \bar a^{1i}D_{1i}u.
\end{align*}
Therefore $u$ satisfies
\begin{equation}\label{eq:first-normal-rewritten}
du=\Bigl(\bar a^{11}D_{11}u+\sum_{i,j=2}^n \bar a^{ij}D_{ij}u+D_i\widetilde f^i\Bigr)\md t
+\sum_{i=2}^n \bar\sigma^{ik}D_i u\md w_t^k
\quad\text{in }Q_T^+.
\end{equation}
Moreover, for each $i\ge2$,
\[
\widetilde f^i(t,0,x')=f^i(t,0,x')-F^i(t,0,x')=0,
\]
which is the crucial boundary cancellation needed below.

\medskip
\textit{Step 2. Extension to the whole space.}
We extend $u$ oddly in the normal variable and write
\[
v(t,x_1,x'):=
\begin{cases}
 u(t,x_1,x'), & x_1\ge0,\\
-u(t,-x_1,x'), & x_1<0.
\end{cases}
\]
Next, for $i=2,\dots,n$ we extend $\widetilde f^i$ oddly, while $\widetilde f^1$ is extended evenly:
\[
\begin{aligned}
\widehat f^1(t,x_1,x') & :=
\begin{cases}
 \widetilde f^1(t,x_1,x'), & x_1\ge0,\\
 \widetilde f^1(t,-x_1,x'), & x_1<0,
\end{cases}\\
\widehat f^i(t,x_1,x') & :=
\begin{cases}
 \widetilde f^i(t,x_1,x'), & x_1\ge0,\\
-\widetilde f^i(t,-x_1,x'), & x_1<0,
\end{cases}
\qquad i=2,\dots,n.
\end{aligned}
\]
Because $\widetilde f^i(t,0,x')=0$ for $i\ge2$, the odd extensions $\widehat f^i$ remain in
$C_x^\alpha(\R^n\times(0,T];L_\omega^\gamma)$. The even extension $\widehat f^1$ is also of class
$C_x^\alpha(\R^n\times(0,T];L_\omega^\gamma)$. Hence $v$ satisfies the whole-space equation
\begin{equation}\label{eq:first-normal-whole-space}
dv=\Bigl(\bar a^{11}D_{11}v+\sum_{i,j=2}^n \bar a^{ij}D_{ij}v+D_i\widehat f^i\Bigr)\md t
+\sum_{i=2}^n \bar\sigma^{ik}D_i v\md w_t^k
\quad\text{in }\R^n\times(0,T],
\end{equation}
with zero initial condition.

We now apply the whole-space estimate for model equations with divergence-form forcing; see
\cite[Section~4]{DuLiu2019}. Since $D_1v$ is the even extension of $D_1u$, this gives
\begin{equation}\label{eq:first-normal-whole-space-estimate}
|D_1u|_{(\alpha,\alpha/2);Q_T^+}^{L_\omega^\gamma}
\le C\Bigl(
|u|_{0;Q_T^+}^{L_\omega^\gamma}
+\sum_{i=1}^n |\widetilde f^i|_{\alpha;Q_T^+}^{L_\omega^\gamma}
\Bigr).
\end{equation}

\medskip
\textit{Step 3. Estimate of the modified forcing terms.}
For $i=2,\dots,n$, the translation defining $F^i$ preserves the $C_x^\alpha$-norm, and therefore
\[
|\widetilde f^i|_{\alpha;Q_T^+}^{L_\omega^\gamma}
\le |f^i|_{\alpha;Q_T^+}^{L_\omega^\gamma}+|F^i|_{\alpha;Q_T^+}^{L_\omega^\gamma}
\le C|f^i|_{\alpha;Q_T^+}^{L_\omega^\gamma}.
\]
For $\widetilde f^1$, using \eqref{eq:first-normal-forcing}, we obtain
\begin{align*}
|\widetilde f^1|_{\alpha;Q_T^+}^{L_\omega^\gamma}
&\le |f^1|_{\alpha;Q_T^+}^{L_\omega^\gamma}
+ C\sum_{i=2}^n |D_i u|_{\alpha;Q_T^+}^{L_\omega^\gamma}
+ C\sum_{i=2}^n |F^i|_{\alpha;Q_T^+}^{L_\omega^\gamma} \\
&\le C\Bigl(
\sum_{i=1}^n |f^i|_{\alpha;Q_T^+}^{L_\omega^\gamma}
+ |D_{x'}u|_{\alpha;Q_T^+}^{L_\omega^\gamma}
\Bigr).
\end{align*}
Finally, Lemma~\ref{prop:tangential} yields
\[
|D_{x'}u|_{\alpha;Q_T^+}^{L_\omega^\gamma}
\le C\Bigl(
|u|_{0;Q_T^+}^{L_\omega^\gamma}
+\sum_{i=1}^n |f^i|_{\alpha;Q_T^+}^{L_\omega^\gamma}
\Bigr).
\]
Substituting the above bounds into \eqref{eq:first-normal-whole-space-estimate}, we obtain
\eqref{eq:first-normal-estimate}. The proof is complete.
\end{proof}

\subsection{A boundary lemma on the half-line}\label{subsec:half-line-lemma}

The main remaining difficulty is the estimate of the second normal derivative. After the reductions in the previous subsection, this difficulty is reduced to a one-dimensional heat equation on the half-line with time-dependent boundary data. The estimate below is a key new ingredient of the paper.

\begin{lemma}\label{lem:half-line-boundary}
Let $\alpha\in(0,1)$, let $h\in C^{1+\alpha/2}([0,T];L_\omega^\gamma)$ satisfy
\[
h(0)=0,
\qquad
h'(0)=0,
\]
and let $v$ be the solution of
\begin{equation}\label{eq:half-line-problem}
\partial_t v(t,y)=\partial_y^2 v(t,y),
\qquad
v(0,y)=0,
\qquad
v(t,0)=h(t).
\end{equation}
Then
\begin{equation}\label{eq:half-line-estimate}
|\partial_t v|_{(\alpha,\alpha/2);[0,T]\times\R_+}^{L_\omega^\gamma}
+|\partial_y^2 v|_{(\alpha,\alpha/2);[0,T]\times\R_+}^{L_\omega^\gamma}
\le C\,[h']_{\alpha/2;[0,T]}^{L_\omega^\gamma},
\end{equation}
where $C=C(\alpha,\gamma,T)$. 

Moreover, if $v_1$ and $v_2$ correspond to boundary data $h_1$ and $h_2$, respectively, and if $h_1'(0)=h_2'(0)=0$, then
\begin{equation}\label{eq:half-line-stability}
\sup_{(t,y)\in[0,T]\times\R_+}
\E\bigl|\partial_y^2 v_1(t,y)-\partial_y^2 v_2(t,y)\bigr|^\gamma
\le C\sup_{t\in[0,T]}\E\bigl|h_1'(t)-h_2'(t)\bigr|^\gamma.
\end{equation}
\end{lemma}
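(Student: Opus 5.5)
The plan is to exploit that \eqref{eq:half-line-problem} is a deterministic linear problem applied pathwise in $\omega$, with constant coefficients, so that $w:=\partial_t v$ is again a solution of the heat equation on the half-line, now with boundary datum $w(t,0)=h'(t)$ and zero initial datum. The zero initial datum uses $v(0,\cdot)=0$, hence $\partial_t v(0,y)=\partial_y^2 v(0,y)=0$, while $h'(0)=0$ gives the corner compatibility. Since $\partial_y^2 v=\partial_t v=w$, both terms on the left of \eqref{eq:half-line-estimate} equal $w$. It therefore suffices to prove
\[
|w|_{(\alpha,\alpha/2);[0,T]\times\R_+}^{L_\omega^\gamma}\le C[h']_{\alpha/2;[0,T]}^{L_\omega^\gamma}
\]
for the solution of the heat equation with Dirichlet datum $\phi:=h'\in C^{\alpha/2}([0,T];L_\omega^\gamma)$ and $\phi(0)=0$.

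For this I would use the explicit double-layer representation
\[
w(t,y)=\int_0^t \partial_y\Phi\text{-kernel}\;\phi(s)\,\md s=\int_0^t K(t-s,y)\phi(s)\,\md s,
\qquad K(\tau,y)=\frac{y}{2\sqrt\pi\,\tau^{3/2}}e^{-y^2/(4\tau)},
\]
with $\int_0^\infty K(\tau,y)\,\md\tau=1$ for each $y>0$. The argument rests on two observations. First, the representation is linear in $\phi$ with a nonnegative kernel. Second, since $L_\omega^\gamma$ is a Banach space, Minkowski's integral inequality lets every estimate be done with $\|\phi(s)-\phi(r)\|_{L_\omega^\gamma}$ in place of absolute values. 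So the $L_\omega^\gamma$-valued norms cause no trouble: it is the classical one-dimensional estimate with values in a Banach space.

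The sup bound is immediate. Extend $\phi$ by $0$ for $s<0$, which keeps the $C^{\alpha/2}$ seminorm because $\phi(0)=0$. Writing
\[
w(t,y)=\phi(t)\int_0^tK+\int_0^tK(t-s,y)\bigl(\phi(s)-\phi(t)\bigr)\md s
\]
gives $\|w\|\le |\phi|_0+C[\phi]_{\alpha/2}\le C(T)[\phi]_{\alpha/2}$, using $|\phi(t)|\le t^{\alpha/2}[\phi]$ for the first term.

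For the H\"older seminorm, the time increments are easy. By time-translation invariance of the zero-extended problem, $w(t+\delta,y)-w(t,y)$ is the solution with datum $\phi(\cdot+\delta)-\phi(\cdot)$, whose sup is at most $\delta^{\alpha/2}[\phi]$; the maximum principle, valid through the positive kernel plus Minkowski, then gives the bound.

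The space increments $w(t,y_1)-w(t,y_2)$ with $d=|y_1-y_2|$ are the main obstacle. I would use the subtracted form
\[
w(t,y)-\phi(t)=\int_{-\infty}^t K(t-s,y)\bigl(\phi(s)-\phi(t)\bigr)\md s,
\]
valid after the zero extension, and split the integral at $t-s=d^2$. On the short-time part each term is bounded by $\int_0^{d^2}K(\tau,y)\tau^{\alpha/2}\,\md\tau\lesssim d^{\alpha}$, after checking the scaling $\int_0^{\rho}K(\tau,y)\tau^{\alpha/2}\md\tau\le C\rho^{\alpha/2}$ uniformly in $y$. On the long-time part I would use $|\partial_yK(\tau,y)|\lesssim \tau^{-3/2}G(\tau,y)$, with $G$ Gaussian-type, and the mean value theorem, giving $d\int_{d^2}^\infty \tau^{-3/2}\tau^{\alpha/2}\,\md\tau\lesssim d^{\alpha}$.

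Mixed increments follow by the triangle inequality. The stability estimate \eqref{eq:half-line-stability} is the sup bound applied to $\phi=h_1'-h_2'$: the identity $w=\int K\phi$ with $\int_0^t K\le1$ gives $\|w\|\le\sup_s\|\phi(s)\|$ directly, without needing the H\"older seminorm.
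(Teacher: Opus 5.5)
Your proof is correct and shares the paper's framework. Both reduce to $w=\partial_t v=\partial_y^2v$, represent $w$ by the half-line Poisson kernel acting on $h'$, extend $h'$ by zero using $h'(0)=0$, and estimate via Minkowski's inequality in $L_\omega^\gamma$ and the cancellation $\int_0^\infty K(\tau,y)\md\tau=1$. Justify the kernel formula for $w$ as the paper does, by differentiating the Poisson representation of $v$ in $t$ and using $h(0)=0$, rather than by a pathwise PDE argument, since $h\in C^{1+\alpha/2}([0,T];L_\omega^\gamma)$ need not be pathwise $C^1$.

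The genuine difference is in the spatial increment. The paper splits according to where the two points lie. For $y_1+y_2\le 2d$ it uses the self-similar form $\partial_tv(t,y)=\int_0^\infty P(r,1)\widetilde h'(t-y^2r)\md r$ together with $|y_1^2-y_2^2|\le 2d^2$. For $y_1+y_2>2d$ it uses the cancellation, the mean value theorem, and $\int_0^\infty|\partial_yP(s,\eta)|s^{\alpha/2}\md s\le C\eta^{\alpha-1}$ with $\eta\ge d/2$. You instead split in time at the parabolic scale $\tau=d^2$. On the short-time part you need only $K\ge0$ and $\int K\le 1$; on the long-time part, only the uniform bound $|\partial_yK(\tau,\cdot)|\le C\tau^{-3/2}$.

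Your route treats all configurations of $y_1,y_2$ at once. It also avoids the fact that the paper's intermediate point $\eta$ depends on $s$, which is harmless there only because $d/2<y_2\le\eta\le y_1<3y_2$. The paper's route instead exploits the self-similarity $P(s,y)\md s=P(r,1)\md r$, $s=y^2r$, so that the time estimate and the near-boundary case become averages of $\widetilde h'$ against a fixed probability density.

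Likewise, your one-line time estimate (positive kernel of mass at most one, plus Minkowski) replaces the paper's split-and-H\"older computation. You also explicitly prove the sup bound $|w|_0\le T^{\alpha/2}[h']_{\alpha/2;[0,T]}^{L_\omega^\gamma}$. The full norm on the left of \eqref{eq:half-line-estimate} requires this bound, and the paper leaves it implicit.
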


\begin{proof}
Let
\[
P(s,y):=\frac{y}{2\sqrt{\pi}\,s^{3/2}}\exp\!\Bigl(-\frac{y^2}{4s}\Bigr),
\qquad s>0,
\ y>0,
\]
be the Poisson kernel for the heat equation on the half-line. Then the solution admits the representation
\begin{equation}\label{eq:half-line-poisson}
v(t,y)=\int_0^t P(s,y)h(t-s)\md s,
\qquad (t,y)\in [0,T]\times \R_+.
\end{equation}
For $y>0$, the change of variables $s=y^2r$ yields
\begin{equation}\label{eq:half-line-scaled-v13}
v(t,y)=\int_0^{t/y^2} P(r,1)h(t-y^2r)\md r,
\qquad
P(r,1):=\frac{1}{2\sqrt{\pi}\,r^{3/2}}e^{-1/(4r)}.
\end{equation}
Since $h(0)=0$, differentiation with respect to $t$ gives
\begin{equation}\label{eq:half-line-dtv-v13}
\partial_t v(t,y)=\int_0^{t/y^2} P(r,1)h'(t-y^2r)\md r,
\qquad t>0,
\ y>0.
\end{equation}
On the other hand, $\partial_t v(t,0)=h'(t)$ for $t\in[0,T]$, and of course
\[
\partial_y^2 v=\partial_t v
\qquad\text{in }(0,T]\times \R_+.
\]
Therefore it suffices to estimate $\partial_t v$.

We now extend $h'$ to $(-\infty,T]$ by setting
\[
\widetilde h'(t):=0,
\qquad t<0.
\]
Because $h'(0)=0$, this extension belongs to $C^{\alpha/2}(( -\infty,T];L_\omega^\gamma)$ and
\begin{equation}\label{eq:half-line-extension-v13}
[\widetilde h']_{\alpha/2;(-\infty,T]}^{L_\omega^\gamma}
\le C [h']_{\alpha/2;[0,T]}^{L_\omega^\gamma}.
\end{equation}
Using \eqref{eq:half-line-dtv-v13} and the extension, we may write for all $(t,y)\in[0,T]\times \R_+$,
\begin{equation}\label{eq:half-line-dtv-extended-v13}
\partial_t v(t,y)=\int_0^\infty P(r,1)\widetilde h'(t-y^2r)\md r.
\end{equation}
Set
\[
\gamma':=\frac{\gamma}{\gamma-1}\in(1,2],
\qquad
\varepsilon:=\frac{1-\alpha}{4}\in\Bigl(0,\frac14\Bigr).
\]
\noindent Throughout this proof, $C$ denotes a positive constant depending only on $\alpha$ and $\gamma$, and may vary from line to line.

We first prove the H\"older continuity in time. Let $t_1,t_2\in[0,T]$ and $y\ge 0$. The case $y=0$ is immediate from $\partial_t v(t,0)=h'(t)$. For $y>0$, \eqref{eq:half-line-dtv-extended-v13} gives
\begin{align*}
\partial_t v(t_1,y)-\partial_t v(t_2,y)
&=\int_0^\infty P(r,1)
   \bigl[\widetilde h'(t_1-y^2r)-\widetilde h'(t_2-y^2r)\bigr]dr \\
&=: I_1+I_2,
\end{align*}
where
\begin{align*}
I_1&:=\int_0^1 P(r,1)
      \bigl[\widetilde h'(t_1-y^2r)-\widetilde h'(t_2-y^2r)\bigr] \md r,\\
I_2&:=\int_1^\infty P(r,1)
      \bigl[\widetilde h'(t_1-y^2r)-\widetilde h'(t_2-y^2r)\bigr]\md r.
\end{align*}
In $I_2$ we use the change of variables $r=s^{-1}$ and obtain
\[
I_2=\int_0^1 s^{-2}P(s^{-1},1)
    \bigl[\widetilde h'(t_1-y^2/s)-\widetilde h'(t_2-y^2/s)\bigr]\md s.
\]
By H\"older's inequality,
\begin{align*}
\mathbb{E}|I_1|^\gamma
&\le \biggl( \int_0^1 |P(r,1)|^{\gamma'}\md r \biggr)^{\gamma/\gamma'}
      \int_0^1 \mathbb{E}\bigl|\widetilde h'(t_1-y^2r)-\widetilde h'(t_2-y^2r)\bigr|^\gamma \md r,\\
\mathbb{E}|I_2|^\gamma
&\le \biggl( \int_0^1 |s^{\varepsilon-2-1/\gamma'}P(s^{-1},1)|^{\gamma'} \md s \biggr)^{\gamma/\gamma'} \\
&\qquad \times \int_0^1 s^{\gamma/2-1-\gamma\varepsilon}
      \mathbb{E}\bigl|\widetilde h'(t_1-y^2/s)-\widetilde h'(t_2-y^2/s)\bigr|^\gamma \md s.
\end{align*}
Since \eqref{eq:half-line-extension-v13} yields
\[
\mathbb{E}\bigl|\widetilde h'(t_1-\theta)-\widetilde h'(t_2-\theta)\bigr|^\gamma
\le C[h']_{\alpha/2;[0,T]}^\gamma |t_1-t_2|^{\gamma\alpha/2}
\qquad \text{for all }\theta\ge 0,
\]
and
\[
\int_0^1 |P(r,1)|^{\gamma'}\md r<\infty,
\qquad
\int_0^1 |s^{\varepsilon-2-1/\gamma'} P(s^{-1},1)|^{\gamma'}\md s<\infty,
\qquad
\int_0^1 s^{\gamma/2-1-\gamma\varepsilon}\md s<\infty,
\]
we conclude that
\begin{equation}\label{eq:half-line-time-holder-v13}
\mathbb{E}\bigl|\partial_t v(t_1,y)-\partial_t v(t_2,y)\bigr|^\gamma
\le C[h']_{\alpha/2;[0,T]}^\gamma |t_1-t_2|^{\gamma\alpha/2}.
\end{equation}

We next estimate the H\"older continuity in the spatial variable. Let $t\in[0,T]$ and $y_1,y_2\ge 0$. Set $d:=|y_1-y_2|$ and assume, without loss of generality, that $y_1\ge y_2$.

\smallskip
\emph{Case 1: $y_1+y_2\le 2d$.}
Using \eqref{eq:half-line-dtv-extended-v13}, we write
\[
\partial_t v(t,y_1)-\partial_t v(t,y_2)
=\int_0^\infty P(r,1)
  \bigl[\widetilde h'(t-y_1^2r)-\widetilde h'(t-y_2^2r)\bigr]\md r.
\]
As above, splitting the integral into $(0,1)$ and $(1,\infty)$, performing the change of variables $r=s^{-1}$ in the second part, and then applying H\"older's inequality, we obtain
\begin{align*}
\mathbb{E}\bigl|\partial_t v(t,y_1)-\partial_t v(t,y_2)\bigr|^\gamma
&\le C[h']_{\alpha/2;[0,T]}^\gamma |y_1^2-y_2^2|^{\gamma\alpha/2} \\
&\qquad \times \biggl(
\int_0^1 r^{\gamma\alpha/2}\md r+
\int_0^1 s^{\gamma(1-\alpha)/2-1-\gamma\varepsilon}\md s
\biggr).
\end{align*}
Both integrals are finite, and since $y_1+y_2\le 2d$,
\[
|y_1^2-y_2^2|=(y_1+y_2)d\le 2d^2.
\]
Hence
\begin{equation}\label{eq:half-line-space-case1-v13}
\mathbb{E}\bigl|\partial_t v(t,y_1)-\partial_t v(t,y_2)\bigr|^\gamma
\le C[h']_{\alpha/2;[0,T]}^\gamma d^{\gamma\alpha}.
\end{equation}

\smallskip
\emph{Case 2: $y_1+y_2>2d$.}
Using \eqref{eq:half-line-poisson} and the identity
\[
\int_0^\infty P(s,y)\md s=1,
\qquad y>0,
\]
we write
\begin{equation}\label{eq:half-line-cancel-v13}
\partial_t v(t,y_1)-\partial_t v(t,y_2)
=\int_0^\infty \bigl(P(s,y_1)-P(s,y_2)\bigr)
  \bigl(\widetilde h'(t-s)-\widetilde h'(t)\bigr)\md s.
\end{equation}
Let $\eta$ be a point between $y_1$ and $y_2$. By the mean value theorem,
\[
|P(s,y_1)-P(s,y_2)|\le d|\partial_yP(s,\eta)|.
\]
A direct differentiation gives
\[
\partial_yP(s,y)=\frac{1}{2\sqrt{\pi}\,s^{3/2}}e^{-y^2/(4s)}
\biggl(1-\frac{y^2}{2s}\biggr),
\]
so that
\begin{equation}\label{eq:half-line-kernel-derivative-v13}
|\partial_yP(s,y)|
\le C s^{-3/2}e^{-y^2/(8s)}\biggl(1+\frac{y^2}{s}\biggr).
\end{equation}
Applying Minkowski's integral inequality to \eqref{eq:half-line-cancel-v13}, we obtain
\begin{align*}
\bigl(\mathbb{E}|\partial_t v(t,y_1)-\partial_t v(t,y_2)|^\gamma\bigr)^{1/\gamma}
&\le d\int_0^\infty |\partial_yP(s,\eta)|
      \bigl(\mathbb{E}|\widetilde h'(t-s)-\widetilde h'(t)|^\gamma\bigr)^{1/\gamma}ds \\
&\le C d[h']_{\alpha/2;[0,T]}
      \int_0^\infty |\partial_yP(s,\eta)|s^{\alpha/2}\md s.
\end{align*}
By \eqref{eq:half-line-kernel-derivative-v13} and the change of variables $s=\eta^2r$,
\begin{align*}
\int_0^\infty |\partial_yP(s,\eta)|s^{\alpha/2}\md s
&\le C\eta^{\alpha-1}
      \int_0^\infty r^{(\alpha-3)/2}e^{-1/(8r)}\biggl(1+\frac1r\biggr)\md r \\
&\le C\eta^{\alpha-1},
\end{align*}
where the last integral is finite because $\alpha\in(0,1)$. Hence
\[
\mathbb{E}\bigl|\partial_t v(t,y_1)-\partial_t v(t,y_2)\bigr|^\gamma
\le C d^\gamma \eta^{\gamma(\alpha-1)}[h']_{\alpha/2;[0,T]}^\gamma.
\]
Since $y_1+y_2>2d$, one has $\eta\ge d/2$. As $\alpha<1$, this implies
\[
d\eta^{\alpha-1}\le Cd^\alpha,
\]
and thus
\begin{equation}\label{eq:half-line-space-case2-v13}
\mathbb{E}\bigl|\partial_t v(t,y_1)-\partial_t v(t,y_2)\bigr|^\gamma
\le C[h']_{\alpha/2;[0,T]}^\gamma d^{\gamma\alpha}.
\end{equation}

Combining \eqref{eq:half-line-space-case1-v13} and \eqref{eq:half-line-space-case2-v13}, we conclude that
\begin{equation}\label{eq:half-line-space-holder-v13}
\mathbb{E}\bigl|\partial_t v(t,y_1)-\partial_t v(t,y_2)\bigr|^\gamma
\le C[h']_{\alpha/2;[0,T]}^\gamma |y_1-y_2|^{\gamma\alpha}.
\end{equation}
Together with \eqref{eq:half-line-time-holder-v13}, we obtain
\[
|\partial_t v|_{(\alpha,\alpha/2);[0,T]\times\R_+}^{L_\omega^\gamma}
\le C[h']_{\alpha/2;[0,T]}^{L_\omega^\gamma}.
\]
Since $\partial_y^2v=\partial_t v$, the estimate \eqref{eq:half-line-estimate} follows.
Finally, let $v_1$ and $v_2$ correspond to boundary data $h_1$ and $h_2$, respectively. Applying \eqref{eq:half-line-dtv-extended-v13} to the difference $v_1-v_2$, we obtain
\begin{align*}
\Bigl(\mathbb{E}\bigl|\partial_y^2v_1(t,y)-\partial_y^2v_2(t,y)\bigr|^\gamma\Bigr)^{1/\gamma}
&=\Bigl(\mathbb{E}\bigl|\partial_t (v_1-v_2)(t,y)\bigr|^\gamma\Bigr)^{1/\gamma} \\
&\le \int_0^\infty P(r,1)
   \Bigl(\mathbb{E}\bigl|(\widetilde{h}_1'-\widetilde{h}_2')(t-y^2r)\bigr|^\gamma\Bigr)^{1/\gamma}dr \\
&\le \sup_{s\in[0,T]}\Bigl(\mathbb{E}\bigl|h_1'(s)-h_2'(s)\bigr|^\gamma\Bigr)^{1/\gamma},
\end{align*}
where $\widetilde{h}_1'-\widetilde{h}_2'$ denotes the extension of $h_1'-h_2'$ by $0$ to $(-\infty,0)$. Taking the supremum over $(t,y)\in[0,T]\times\R_+$ proves \eqref{eq:half-line-stability}.
\end{proof}

\subsection{The second normal derivative and completion of the proof}

We now complete the proof of Proposition~\ref{prop:model-main}. The estimates obtained in the previous subsections already control the mixed derivatives and the first normal derivative. It remains to estimate $D_{11}u$. The point of the argument below is that, after removing the stochastic term and then the boundary trace of the reduced forcing, the remaining equation becomes a one-dimensional Dirichlet problem in the normal variable, to which an odd-extension argument can be applied.

We begin with the case $g\equiv0$.

\begin{lemma}\label{lem:model-g-zero}
Let $u$ be a quasi-classical solution of \eqref{eq:model-half-space}--\eqref{eq:model-half-space-bc} with $g\equiv0$. Assume that
\[
f\in C_x^{\alpha}(\bar Q_T^+;L_\omega^\gamma),
\qquad
f|_{\Sigma_T}\in C_{x,t}^{\alpha,\alpha/2}(\Sigma_T;L_\omega^\gamma),
\qquad
f(0,\cdot)|_{\partial\R_+^n}=0.
\]
Then
\begin{equation}\label{eq:model-g-zero-estimate}
|D_x^2u|_{(\alpha,\alpha/2);Q_T^+}^{L_\omega^\gamma}
\le C\Bigl(
|u|_{0;Q_T^+}^{L_\omega^\gamma}
+|f|_{\alpha;Q_T^+}^{L_\omega^\gamma}
+|f|_{(\alpha,\alpha/2);\Sigma_T}^{L_\omega^\gamma}
\Bigr),
\end{equation}
where $C$ depends only on $n$, $\alpha$, $\gamma$, $\kappa$, and $K$.
\end{lemma}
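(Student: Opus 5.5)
Our plan is to control all second derivatives except $D_{11}u$ by the two preceding lemmas, and then to recover $D_{11}u$ through a correction term built from Lemma~\ref{lem:half-line-boundary}.

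\emph{Derivatives other than $D_{11}u$.} For $l\ge2$, the function $D_lu$ solves the same model equation with forcing $D_lf$, and it vanishes on $\Sigma_T$. We write $D_lf=D_i(\delta_{il}f)$, so that $D_lf$ is of divergence form with a single $C^\alpha_x$ component. Applying Lemma~\ref{prop:tangential} and Lemma~\ref{prop:first-normal} to $D_lu$ then gives $D_{x'}D_lu$ and $D_1D_lu$ in $C^{\alpha,\alpha/2}$. The bound involves $|D_lu|_0$, which we absorb by interpolation against $|u|_0$ and $[D^2u]_\alpha$, or alternatively through the local pointwise bound of Lemma~\ref{lem:local-pointwise}. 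Hence every derivative $D_{ij}u$ with $(i,j)\ne(1,1)$ is controlled by the right-hand side of \eqref{eq:model-g-zero-estimate}.

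\emph{Isolating $D_{11}u$.} We solve for $D_{11}u$ from the equation and study the quantity
\[
\bar a^{11}D_{11}u+f=du/dt-\sum_{(i,j)\neq(1,1)}\bar a^{ij}D_{ij}u-\sum_{i\ge2}\bar\sigma^{ik}D_iu\,\dot w^k .
\]
To avoid the noise term, we use the fact that $\bar\sigma^{1k}=0$ and that every $D_iu$ with $i\ge2$ vanishes on $\Sigma_T$. We introduce $z:=u-u_0$, where $u_0$ solves the whole-space problem obtained by odd extension of $u$ with the noise retained. Concretely, $u_0$ handles the tangential stochastic part and has $C^{2+\alpha}$ regularity controlled by Du--Liu. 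Because the tangential operator commutes with odd extension, the odd extension is legitimate for all terms except $\bar a^{1i}D_{1i}$ and $f$.

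We instead adopt the cleaner route the introduction suggests. Set $F(t,x'):=f(t,0,x')$, the boundary trace, and consider $w:=u-\phi$, where $\phi$ is chosen so that the forcing for $w$ vanishes on $x_1=0$. We take
\[
\phi(t,x_1,x')=\text{solution of the half-line problem }\partial_t\phi=\bar a^{11}\partial_{1}^2\phi,\ \phi(t,0,x')=h(t,x'),
\]
with $h(t,x')=\int_0^tF(s,x')\,ds$. This choice gives $h'=F$, and $h(0)=h'(0)=0$ by the corner condition $F(0,\cdot)=0$. The coefficient $\bar a^{11}$ is removed by a time change, which is admissible because $\bar a^{11}\in[\kappa/2,K/2]$. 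Lemma~\ref{lem:half-line-boundary}, applied pointwise in $x'$ together with its stability estimate \eqref{eq:half-line-stability} in $x'$, gives $D_{11}\phi$ in $C^{\alpha,\alpha/2}$ with norm bounded by $[F]_{(\alpha,\alpha/2);\Sigma_T}$. We still need to check the tangential H\"older continuity of $D_{11}\phi$; we obtain it by applying the linear lemma to the difference of the data at $x'$ and $y'$ and using the H\"older seminorm.

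\emph{The reduced equation and the main obstacle.} The function $w$ then satisfies the model equation with zero boundary value $w=u-h$ on $\Sigma_T$. We need to check this boundary value carefully, and correct the construction so that $w$ really vanishes there. The forcing of $w$ is $f-F+$ terms in $D_{x'}\phi$ and $D_1D_{x'}\phi$, together with a stochastic term $\bar\sigma^{ik}D_i\phi$ with $i\ge2$. We expect the main obstacle to lie in this stochastic term and in these mixed terms, for two reasons: $\phi$ is defined by a pathwise heat equation, and the quantity $\partial_{x'}\phi$ requires regularity of $F$ in $x'$ of order beyond $\alpha$. Our first attempt will be to use only the second-normal component: we run the odd extension argument directly on $D_{11}$. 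At the boundary $D_{11}w=0$, since $\bar a^{11}D_{11}w=dw/dt+\dots$ and all remaining terms vanish on $x_1=0$. Because $f-F$ vanishes on $x_1=0$, the odd extension of $w$ solves a whole-space equation. Its only non-odd-compatible terms, $\bar a^{1i}D_{1i}$, are moved to the right-hand side and controlled by the mixed-derivative bounds from the first step, exactly as in Step~1 of Lemma~\ref{prop:first-normal}. The whole-space Schauder estimate then gives $D_{11}w\in C^{\alpha,\alpha/2}$, and combining $D_{11}u=D_{11}w+D_{11}\phi$ completes \eqref{eq:model-g-zero-estimate}.
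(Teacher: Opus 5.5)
\textbf{There is a genuine gap in the $D_{11}u$ step.}

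Your treatment of $D_{ij}u$ with $(i,j)\neq(1,1)$ is essentially the paper's Step~1. The paper uses difference quotients $\delta_h^\ell u$ with $\delta_h^\ell f=D_\ell F_h$, since $D_\ell u$ need not be twice differentiable, and takes $|Du|_0$ from Lemma~\ref{lem:local-pointwise}.

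\emph{You remove the wrong boundary trace.} On $\Sigma_T$ the equation gives $\bar a^{11}D_{11}u=-f-2\sum_{j\ge2}\bar a^{1j}D_{1j}u$, and the cross derivatives $D_{1j}u$ do not vanish there. After subtracting a correction built from $f|_{\Sigma_T}$ alone, the forcing still has the nonzero trace $2\sum_{j\ge2}\bar a^{1j}D_{1j}u|_{\Sigma_T}$. Consequently:
\begin{itemize}
\item your claim that $D_{11}w=0$ on $\{x_1=0\}$ is false;
\item the odd extension of the forcing jumps across $\{x_1=0\}$, so no whole-space Schauder estimate applies.
\end{itemize}
The device of Lemma~\ref{prop:first-normal} does not rescue this. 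There $2\bar a^{1i}D_{1i}u$ is written as $D_1(2\bar a^{1i}D_iu)$ and $2\bar a^{1i}D_iu$ is extended evenly. That suffices for $D_1u\in C^{\alpha,\alpha/2}$. For $D_{11}u$, however, you would need $D_1$ of that even extension to be $C^\alpha$, and it jumps unless $D_{1i}u=0$ on $\Sigma_T$.

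\emph{The obstacle you flagged is left unresolved.} Keeping $\bar a^{ij}D_{ij}$ and $\bar\sigma^{ik}D_i$ acting on $w=u-\phi$ produces $D_{x'}^2\phi$ and $D_1D_{x'}\phi$ in the drift and $\bar\sigma^{ik}D_i\phi$ in the noise. None of these is controlled when the boundary datum is only $C^\alpha$ in $x'$. Separately, a random time change to remove $\bar a^{11}$ is not harmless: it evaluates the $L^\gamma_\omega$-valued norms at random times.

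\textbf{The missing idea, as the paper does it.}
\begin{itemize}
\item \emph{Remove the noise first.} Let $U$ solve $dU=\Delta U\,dt+\bar\sigma^{ik}D_iu\,dw_t^k$ with zero data. Since $\bar\sigma^{1k}=0$, this additive noise vanishes on $\Sigma_T$, and its $C^{1+\alpha}_x$-norm is controlled by Step~1. Odd extension and the Du--Liu estimate then give $U\in C^{2+\alpha,\alpha/2}$.
\item \emph{Treat every second derivative except $D_{11}$ as known data.} Then $\tilde u=u-U$ solves the pathwise one-dimensional equation $\partial_t\tilde u=\bar a^{11}D_{11}\tilde u+\tilde f$, where $\tilde f=f+\sum_{(i,j)\neq(1,1)}\bar a^{ij}D_{ij}u+(\text{second derivatives of }U)$.
\item \emph{Remove the trace of $\tilde f$, not of $f$.} Let $V$ solve $\partial_tV=\partial_{11}V+b$ with zero data, where $b=(\bar a^{11})^{-1}\tilde f(t,0,x')$. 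Apply Lemma~\ref{lem:half-line-boundary} to $V-\int_0^t b\,ds$. The paper bounds $|\tilde f|_{(\alpha,\alpha/2);\Sigma_T}$ using the space--time H\"older bound on $D_xD_{x'}u$ from Step~1 and the bound on $U$. The conditions $u(0)=U(0)=0$ and the corner condition give $b(0,\cdot)=0$.
\item \emph{Finish by odd extension.} Because no tangential operator acts on $\tilde u$, $V$ enters the equation for $w=\tilde u-V$ only through $D_{11}V$. The forcing is $(\bar a^{11}-1)D_{11}V+\tilde f-b$, which vanishes on $\Sigma_T$ since $D_{11}V=-b$ there. Odd extension in $x_1$ with $x'$ frozen then finishes the proof.
\end{itemize}
Using the unit heat operator for $V$ and moving $(\bar a^{11}-1)D_{11}V$ into the forcing is also how the paper avoids a time change. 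The zero-boundary correction $h-\phi$ you were heading toward is essentially this $V$. Your use of \eqref{eq:half-line-stability} for its $x'$-H\"older continuity fits the scheme once the correct trace is used.
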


\begin{proof}
We divide the proof into two steps.

\medskip
\textit{Step 1. Estimate of the mixed derivatives.}
For $\ell=2,\dots,n$ and $h\neq0$, define
\[
\delta_h^\ell u(t,x):=\frac{u(t,x+he_\ell)-u(t,x)}{h}.
\]
Since the translation is tangential to the flat boundary, $\delta_h^\ell u$ still vanishes on $\Sigma_T$. Moreover,
\[
d\,\delta_h^\ell u
=\bigl(\bar a^{ij}(t)D_{ij}\delta_h^\ell u+\delta_h^\ell f\bigr)dt
+\bar\sigma^{ik}(t)D_i\delta_h^\ell u\md w_t^k.
\]
Writing
\[
\delta_h^\ell f=D_\ell F_h,
\qquad
F_h(t,x):=\int_0^1 f(t,x+\theta he_\ell)\md \theta,
\]
we may apply Lemma~\ref{prop:first-normal} to $\delta_h^\ell u$. Since
\[
|F_h|_{\alpha;Q_T^+}^{L_\omega^\gamma}
\le |f|_{\alpha;Q_T^+}^{L_\omega^\gamma},
\qquad
|\delta_h^\ell u|_{0;Q_T^+}^{L_\omega^\gamma}
\le |D_\ell u|_{0;Q_T^+}^{L_\omega^\gamma},
\]
we obtain
\begin{equation}\label{eq:mixed-difference-quotient}
|D_x\delta_h^\ell u|_{(\alpha,\alpha/2);Q_T^+}^{L_\omega^\gamma}
\le C\Bigl(
|D_\ell u|_{0;Q_T^+}^{L_\omega^\gamma}
+|f|_{\alpha;Q_T^+}^{L_\omega^\gamma}
\Bigr).
\end{equation}
By Lemma~\ref{lem:local-pointwise}, a covering argument, and the equation with $g\equiv0$, we have
\begin{equation}\label{eq:first-derivative-model-g-zero}
|Du|_{0;Q_T^+}^{L_\omega^\gamma}
\le C\Bigl(
|u|_{0;Q_T^+}^{L_\omega^\gamma}
+|f|_{\alpha;Q_T^+}^{L_\omega^\gamma}
\Bigr).
\end{equation}
Letting $h\to0$ in \eqref{eq:mixed-difference-quotient}, we infer that
\begin{equation}\label{eq:mixed-derivative-estimate}
|D_xD_{x'}u|_{(\alpha,\alpha/2);Q_T^+}^{L_\omega^\gamma}
\le C\Bigl(
|u|_{0;Q_T^+}^{L_\omega^\gamma}
+|f|_{\alpha;Q_T^+}^{L_\omega^\gamma}
\Bigr).
\end{equation}

\medskip
\textit{Step 2. Estimate of $D_{11}u$.}
The remaining difficulty is the second derivative in the normal direction. We proceed by two reductions. We first remove the stochastic term, thereby reducing the problem to a one-dimensional equation in the normal variable. The reduced forcing still has a nonzero boundary trace, which prevents a direct odd extension. We therefore split off this boundary trace by a further correction term.

Let $U$ be the solution of
\begin{equation}\label{eq:U-equation}
\begin{cases}
dU=\Delta U\md t+\bar\sigma^{ik}(t)D_i u\md w_t^k &\text{in }Q_T^+,\\
U=0 &\text{on }\Sigma_T,\\
U(0,\cdot)=0 &\text{in }\R^n_+.
\end{cases}
\end{equation}
Since $\bar\sigma^{1k}=0$ and $u=0$ on $\Sigma_T$, the stochastic forcing in \eqref{eq:U-equation} vanishes on $\Sigma_T$. By odd extension across $\{x_1=0\}$ and the whole-space Schauder estimate for the stochastic heat equation, see~\cite[Theorem~1.3]{DuLiu2019},
\begin{equation}\label{eq:U-estimate}
\begin{aligned}
|U|_{(2+\alpha,\alpha/2);Q_T^+}^{L_\omega^\gamma}
&\le C\,|\bar\sigma^{ik}D_i u|_{1+\alpha;Q_T^+}^{L_\omega^\gamma(\elltwo)}
\le C\Bigl(
|D_xD_{x'}u|_{(\alpha,\alpha/2);Q_T^+}^{L_\omega^\gamma}
+|Du|_{0;Q_T^+}^{L_\omega^\gamma}
\Bigr)\\
&\le C\Bigl(
|u|_{0;Q_T^+}^{L_\omega^\gamma}
+|f|_{\alpha;Q_T^+}^{L_\omega^\gamma}
\Bigr),
\end{aligned}
\end{equation}
where in the last step we used \eqref{eq:first-derivative-model-g-zero} and \eqref{eq:mixed-derivative-estimate}.

Set $\widetilde u:=u-U$. Then $\widetilde u$ satisfies the one-dimensional equation
\begin{equation}\label{eq:utilde-equation}
\partial_t\widetilde u
=\bar a^{11}(t)D_{11}\widetilde u+\widetilde f
\qquad\text{in }Q_T^+,
\end{equation}
with zero boundary and initial conditions, where
\begin{equation}\label{eq:tildef-equation}
\widetilde f
:=f
+\sum_{(i,j)\neq(1,1)}\bar a^{ij}(t)D_{ij}u
+\sum_{i,j=1}^n \bigl(\bar a^{ij}(t)-\delta_{ij}\bigr)D_{ij}U.
\end{equation}
Using \eqref{eq:U-estimate} and \eqref{eq:mixed-derivative-estimate}, we obtain
\begin{align}
|\widetilde f|_{\alpha;Q_T^+}^{L_\omega^\gamma}
&\le C\Bigl(
|u|_{0;Q_T^+}^{L_\omega^\gamma}
+|f|_{\alpha;Q_T^+}^{L_\omega^\gamma}
\Bigr),
\label{eq:tildef-alpha-bound}
\\
|\widetilde f|_{(\alpha,\alpha/2);\Sigma_T}^{L_\omega^\gamma}
&\le C\Bigl(
|u|_{0;Q_T^+}^{L_\omega^\gamma}
+|f|_{\alpha;Q_T^+}^{L_\omega^\gamma}
+|f|_{(\alpha,\alpha/2);\Sigma_T}^{L_\omega^\gamma}
\Bigr).
\label{eq:tildef-boundary-bound}
\end{align}

Since $\widetilde f$ does not vanish on $\Sigma_T$, the equation for $\widetilde u$ is not yet suitable for an odd extension. We therefore remove the boundary trace of $\widetilde f$. Write
\[
b(t,x'):=\bigl(\bar a^{11}(t)\bigr)^{-1}\widetilde f(t,0,x').
\]
Let $V$ be the solution of
\begin{equation}\label{eq:V-equation}
\begin{cases}
\partial_tV=\partial_{11}V+b(t,x') &\text{in }Q_T^+,\\
V=0 &\text{on }\Sigma_T,\\
V(0,\cdot)=0 &\text{in }\R^n_+.
\end{cases}
\end{equation}
Set
\[
H(t,x'):=\int_0^t b(s,x')\md s.
\]
Then $H(0,x')=0$. The corner compatibility condition is used here to show
\[
\partial_tH(0,x')=b(0,x')=0.
\]
Indeed, since $u(0,\cdot)=0$ and $U(0,\cdot)=0$, their spatial derivatives vanish at $t=0$.
Hence $\widetilde f(0,0,x')=f(0,0,x')$, and the half-space corner condition $f(0,\cdot)|_{\partial\R_+^n}=0$ yields $\widetilde f(0,0,x')=0$, and therefore $b(0,x')=0$.

Now $W:=V-H$ satisfies
\[
\partial_tW=\partial_{11}W
\qquad\text{in }Q_T^+,
\qquad
W(t,0,x')=-H(t,x').
\]
Since $-H(0,x')=0$ and $-\partial_tH(0,x')=0$, Lemma~\ref{lem:half-line-boundary}
applies pointwise in $x'$. The corner compatibility condition makes this application
natural because it guarantees the required vanishing of $H$ and its time derivative at
$t=0$.

Applying the same estimate to tangential difference quotients, we obtain
\begin{equation}\label{eq:V-estimate}
|\partial_tV|_{(\alpha,\alpha/2);Q_T^+}^{L_\omega^\gamma}
+|D_{11}V|_{(\alpha,\alpha/2);Q_T^+}^{L_\omega^\gamma}
\le C\,|\widetilde f|_{(\alpha,\alpha/2);\Sigma_T}^{L_\omega^\gamma}.
\end{equation}
Combining \eqref{eq:V-estimate} with \eqref{eq:tildef-boundary-bound}, we obtain
\begin{equation}\label{eq:V-estimate-2}
|\partial_tV|_{(\alpha,\alpha/2);Q_T^+}^{L_\omega^\gamma}
+|D_{11}V|_{(\alpha,\alpha/2);Q_T^+}^{L_\omega^\gamma}
\le C\Bigl(
|u|_{0;Q_T^+}^{L_\omega^\gamma}
+|f|_{\alpha;Q_T^+}^{L_\omega^\gamma}
+|f|_{(\alpha,\alpha/2);\Sigma_T}^{L_\omega^\gamma}
\Bigr).
\end{equation}

Let
\[
w:=\widetilde u-V.
\]
By construction, $V$ removes precisely the boundary trace of the reduced forcing. Therefore $w$ satisfies
\begin{equation}\label{eq:w-equation}
\partial_t w=\bar a^{11}(t)D_{11}w+F
\qquad\text{in }Q_T^+,
\end{equation}
with zero boundary and initial conditions, where
\begin{equation}\label{eq:F-equation}
F:=\bigl(\bar a^{11}(t)-1\bigr)D_{11}V
+\widetilde f(t,x)-\bar a^{11}(t)^{-1}\widetilde f(t,0,x').
\end{equation}
The boundary conditions imply
\[
\bar a^{11}(t)D_{11}\widetilde u(t,0,x')+\widetilde f(t,0,x')=0,
\qquad
D_{11}V(t,0,x')+\bar a^{11}(t)^{-1}\widetilde f(t,0,x')=0,
\]
and therefore
\begin{equation}\label{eq:F-boundary-zero}
F(t,0,x')=0.
\end{equation}
By \eqref{eq:tildef-alpha-bound} and \eqref{eq:V-estimate-2}, and since the last term in \eqref{eq:F-equation} is independent of $x_1$, we have
\begin{equation}\label{eq:F-bound}
|F|_{\alpha;Q_T^+}^{L_\omega^\gamma}
\le C\Bigl(
|u|_{0;Q_T^+}^{L_\omega^\gamma}
+|f|_{\alpha;Q_T^+}^{L_\omega^\gamma}
+|f|_{(\alpha,\alpha/2);\Sigma_T}^{L_\omega^\gamma}
\Bigr).
\end{equation}
Since $F$ vanishes on $\Sigma_T$, we may extend both $w$ and $F$ oddly across $\{x_1=0\}$. For each fixed $x'$, the odd extension solves a one-dimensional deterministic parabolic equation on $\R$ with coefficient $\bar a^{11}(t)$ and forcing given by the odd extension of $F(\cdot,\cdot,x')$. Applying the standard one-dimensional Schauder estimate on the whole line with $x'$ frozen, and then repeating the same argument for tangential difference quotients, we infer that
\begin{equation}\label{eq:w-estimate}
|D_{11}w|_{(\alpha,\alpha/2);Q_T^+}^{L_\omega^\gamma}
\le C\Bigl(
|w|_{0;Q_T^+}^{L_\omega^\gamma}
+|F|_{\alpha;Q_T^+}^{L_\omega^\gamma}
\Bigr).
\end{equation}
Using \eqref{eq:U-estimate}, \eqref{eq:V-estimate-2}, and \eqref{eq:F-bound}, we obtain
\[
|w|_{0;Q_T^+}^{L_\omega^\gamma}+|F|_{\alpha;Q_T^+}^{L_\omega^\gamma}
\le C\Bigl(
|u|_{0;Q_T^+}^{L_\omega^\gamma}
+|f|_{\alpha;Q_T^+}^{L_\omega^\gamma}
+|f|_{(\alpha,\alpha/2);\Sigma_T}^{L_\omega^\gamma}
\Bigr).
\]
Combining this with \eqref{eq:w-estimate}, \eqref{eq:V-estimate-2}, and \eqref{eq:U-estimate}, we arrive at
\begin{equation}\label{eq:D11-estimate-g-zero}
|D_{11}u|_{(\alpha,\alpha/2);Q_T^+}^{L_\omega^\gamma}
\le C\Bigl(
|u|_{0;Q_T^+}^{L_\omega^\gamma}
+|f|_{\alpha;Q_T^+}^{L_\omega^\gamma}
+|f|_{(\alpha,\alpha/2);\Sigma_T}^{L_\omega^\gamma}
\Bigr).
\end{equation}
Together with \eqref{eq:mixed-derivative-estimate}, this proves \eqref{eq:model-g-zero-estimate}.
\end{proof}

\begin{proof}[Proof of Proposition~\ref{prop:model-main}]
Choose $M>0$ sufficiently large, depending only on the structural constants, so that the pair $(M\delta^{ij},\bar\sigma^{ik})$ satisfies the stochastic parabolicity condition. Let $U$ be the solution of the auxiliary equation
\begin{equation}\label{eq:additive-noise-reduction}
\begin{cases}
dU=M\Delta U\md t+\bigl(\bar\sigma^{ik}(t)D_iU+g^k\bigr)\md w_t^k &\text{in }Q_T^+,\\
U=0 &\text{on }\Sigma_T,\\
U(0,\cdot)=0 &\text{in }\R^n_+.
\end{cases}
\end{equation}
Since $\bar\sigma^{1k}=0$ and $g=0$ on $\Sigma_T$, the stochastic forcing vanishes on $\Sigma_T$: for $i\ge2$, $D_iU=0$ on $\{y_1=0\}$ because $U\equiv0$ there; for $i=1$, $\bar\sigma^{1k}=0$. Hence we may extend all data oddly across $\{x_1=0\}$ and apply the whole-space Schauder estimate for equations with gradient noise (see~\cite[Theorem~1.3]{DuLiu2019}) to obtain
\begin{equation}\label{eq:additive-noise-estimate}
|U|_{(2+\alpha,\alpha/2);Q_T^+}^{L_\omega^\gamma}
\le C|g|_{1+\alpha;Q_T^+}^{L_\omega^\gamma(\elltwo)},
\end{equation}
where $C$ depends only on $n$, $\alpha$, $\gamma$, $\kappa$, $K$, and $M$.

Set $\widehat u:=u-U$. Subtracting the equation for $U$ from \eqref{eq:model-half-space} and noting that the stochastic gradient noise terms cancel, we obtain
\[
d\widehat u
=\bigl(\bar a^{ij}(t)D_{ij}\widehat u+\widehat f\bigr)\md t
+\bar\sigma^{ik}(t)D_i\widehat u\md w_t^k
\quad\text{in }Q_T^+,
\]
with zero boundary and initial conditions, where
\[
\widehat f:=f+\sum_{i,j=1}^n\bigl(\bar a^{ij}(t)-M\delta^{ij}\bigr)D_{ij}U.
\]
By \eqref{eq:additive-noise-estimate},
\begin{align*}
|\widehat f|_{\alpha;Q_T^+}^{L_\omega^\gamma}
+|\widehat f|_{(\alpha,\alpha/2);\Sigma_T}^{L_\omega^\gamma}
\le C\Bigl(
|f|_{\alpha;Q_T^+}^{L_\omega^\gamma}
+|f|_{(\alpha,\alpha/2);\Sigma_T}^{L_\omega^\gamma}
+|g|_{1+\alpha;Q_T^+}^{L_\omega^\gamma(\elltwo)}
\Bigr).
\end{align*}
Since $U(0,\cdot)=0$ and $U$ has the regularity established in \eqref{eq:additive-noise-estimate}, its spatial derivatives vanish at $t=0$. Hence
\[
|\widehat f(0,\cdot)|_{0;\partial\R_+^n}^{L_\omega^\gamma}
=|f(0,\cdot)|_{0;\partial\R_+^n}^{L_\omega^\gamma}=0,
\]
so $\widehat f$ satisfies the assumptions of Lemma~\ref{lem:model-g-zero}. Applying that lemma to $\widehat u$ yields
\begin{align*}
|D_x^2\widehat u|_{(\alpha,\alpha/2);Q_T^+}^{L_\omega^\gamma}
\le C\Bigl(
|\widehat u|_{0;Q_T^+}^{L_\omega^\gamma}
+|\widehat f|_{\alpha;Q_T^+}^{L_\omega^\gamma}
+|\widehat f|_{(\alpha,\alpha/2);\Sigma_T}^{L_\omega^\gamma}
\Bigr).
\end{align*}
Combining the estimate for $\widehat f$ with the above and observing $u=\widehat u+U$, we conclude using \eqref{eq:additive-noise-estimate} that
\begin{align*}
|D_x^2u|_{(\alpha,\alpha/2);Q_T^+}^{L_\omega^\gamma}
\le C\Bigl(
|u|_{0;Q_T^+}^{L_\omega^\gamma}
+|f|_{\alpha;Q_T^+}^{L_\omega^\gamma}
+|f|_{(\alpha,\alpha/2);\Sigma_T}^{L_\omega^\gamma}
+|g|_{1+\alpha;Q_T^+}^{L_\omega^\gamma(\elltwo)}
\Bigr),
\end{align*}
which is exactly \eqref{eq:model-main-estimate}. The proof is complete.
\end{proof}

\section{Proof of Theorem~\ref{thm:schauder}}\label{sec:global-estimate}

We now pass from the flat-boundary model studied in Section~\ref{sec:halfspace} to general domains. 
The geometric part of the argument is standard and follows the boundary flattening procedure used in~\cite[Definition~2.1 and Section~3.2]{Du2020}. The interior estimate is exactly the same as in the whole-space Schauder theory of~\cite[Sections~4 and~5]{DuLiu2019}. Thus the only points that require verification are the following: after flattening the boundary, the transformed equation still satisfies the flat-boundary compatibility condition required in Section~\ref{sec:halfspace}; and, after localization, the variable coefficients may indeed be treated as a perturbation of a frozen model equation.

Fix $z\in\partial G$. Choose $r_z>0$ and a $C^{2+\alpha}$-diffeomorphism
\[
\Psi_z:B_{r_z}(z)\to U_z\subset\R^n
\]
such that
\[
\Psi_z\bigl(B_{r_z}(z)\cap G\bigr)=U_z\cap\R^n_+,
\qquad
\Psi_z\bigl(B_{r_z}(z)\cap\partial G\bigr)=U_z\cap\{y_1=0\}.
\]
Write $\Phi_z:=\Psi_z^{-1}$ and, for $y\in U_z\cap\R^n_+$,
\[
\widetilde u(t,y):=u\bigl(t,\Phi_z(y)\bigr).
\]
A direct computation shows that $\widetilde u$ satisfies, in a smaller neighborhood of the origin,
\begin{equation}\label{eq:flattened-equation}
 d\widetilde u
 =\bigl(\widetilde a^{ij}D_{ij}\widetilde u+\widetilde b^iD_i\widetilde u+\widetilde c\widetilde u+\widetilde f\bigr)dt
 +\bigl(\widetilde\sigma^{ik}D_i\widetilde u+\widetilde\nu^k\widetilde u+\widetilde g^k\bigr)\md w_t^k,
\end{equation}
with zero boundary condition on $\{y_1=0\}$. Here the transformed coefficients are of the same type as in \eqref{eq:main}; moreover, by the chain rule and the regularity of $\Psi_z$ and $\Phi_z$, they satisfy the same H\"older assumptions as the original coefficients, with constants depending only on the structural data and on the $C^{2+\alpha}$-character of $G$.

The compatibility condition is preserved under this change of variables. Indeed, if $y=\Psi_z(x)$, then
\[
D_i u(t,x)=D_\ell \widetilde u(t,y) D_i\Psi_z^\ell(x),
\]
so the transformed gradient coefficient is given by
\[
\widetilde\sigma^{\ell k}(t,y)=\sigma^{ik}(t,x)D_i\Psi_z^\ell(x).
\]
On the boundary, the first component $\Psi_z^1$ is a defining function of $\partial G\cap B_{r_z}(z)$, and hence $D\Psi_z^1(x)$ is normal to the boundary and therefore parallel to $n(x)$. Consequently,
\[
\widetilde\sigma^{1k}(t,y)=\sigma^{ik}(t,x)D_i\Psi_z^1(x)=0
\qquad\text{whenever } y_1=0,
\]
by Assumption~\ref{ass:compatibility}. Thus the transformed equation falls within the scope of Proposition~\ref{prop:model-main} after the boundary has been flattened.

We next derive a local boundary estimate. Let $\chi\in C_0^\infty(B_{2r}(0))$ be such that $\chi\equiv1$ on $B_r(0)$, where $r>0$ will be chosen later, and set
\[
v:=\chi\widetilde u.
\]
After extending the transformed coefficients from $U_z\cap\R^n_+$ to the whole half-space in a standard way, we may write the equation for $v$ on $Q_T^+=\R^n_+\times(0,T]$ in the form
\begin{equation}\label{eq:localized-flattened-equation}
 dv=\bigl(\widetilde a^{ij}(t,0)D_{ij}v+F\bigr)dt
 +\bigl(\widetilde\sigma^{ik}(t,0)D_iv+G^k\bigr)\md w_t^k,
\end{equation}
with zero boundary and initial conditions. Here
\begin{align*}
F={}&\chi\widetilde f
+\bigl(\widetilde a^{ij}(t,y)-\widetilde a^{ij}(t,0)\bigr)D_{ij}v
+\widetilde b^iD_iv+\widetilde c v \\
&\quad +2\widetilde a^{ij}D_i\chi\,D_j\widetilde u
+\widetilde a^{ij}D_{ij}\chi\,\widetilde u
-\bigl(\widetilde a^{ij}(t,y)-\widetilde a^{ij}(t,0)\bigr)
\bigl(2D_i\chi\,D_j\widetilde u+D_{ij}\chi\,\widetilde u\bigr),
\end{align*}
and
\begin{align*}
G^k={}\chi\widetilde g^k
+\bigl(\widetilde\sigma^{ik}(t,y)-\widetilde\sigma^{ik}(t,0)\bigr)D_iv
+\widetilde\nu^k v
-\widetilde\sigma^{ik}(t,y)D_i\chi\,\widetilde u.
\end{align*}
On $\Sigma_T$ we have $v=0$ and $\widetilde g=0$. We verify that every term in $G$ vanishes on $\Sigma_T$:
\begin{itemize}
\item $\chi\widetilde g^k=0$ on $\Sigma_T$ because $\widetilde g=0$ on $\Sigma_T$;
\item $\widetilde\nu^k v=0$ on $\Sigma_T$ because $v=0$ there;
\item $-\widetilde\sigma^{ik}(t,y)D_i\chi\,\widetilde u=0$ on $\Sigma_T$ because $\widetilde u=v=0$ on $\Sigma_T$ (recall $\chi\equiv1$ near the boundary);
\item for $i\ge2$, $D_i v=0$ on $\Sigma_T$ (as a tangential derivative of $v\equiv0$ on $\{y_1=0\}$), hence $(\widetilde\sigma^{ik}(t,y)-\widetilde\sigma^{ik}(t,0))D_i v=0$ there;
\item for $i=1$, the transformed compatibility condition gives $\widetilde\sigma^{1k}(t,y)=0$ on $\{y_1=0\}$, and the frozen coefficient satisfies $\widetilde\sigma^{1k}(t,0)=0$. Hence $(\widetilde\sigma^{1k}(t,y)-\widetilde\sigma^{1k}(t,0))D_1 v=0$ on $\Sigma_T$.
\end{itemize}
Therefore every term in $G$ vanishes on $\Sigma_T$, and hence
\[
G=0\qquad\text{on }\Sigma_T.
\]

The corner compatibility condition is inherited by the localized forcing $F$. Indeed, at $t=0$, the localized solution $v$ and its spatial derivatives vanish because $v(0,\cdot)=0$. Hence all commutator and coefficient-oscillation terms vanish at $\{0\}\times\partial\R_+^n$. The only remaining boundary contribution is $\chi\widetilde f(0,\cdot)$, which is zero by $f(0,\cdot)=0$ on $\partial G$. Thus $F(0,\cdot)=0$ on $\partial\R_+^n$, as required by Proposition~\ref{prop:model-main}.

Applying Proposition~\ref{prop:model-main}, we obtain
\begin{align}
|v|_{(2+\alpha,\alpha/2);Q_T^+}^{L_\omega^\gamma}
\le C\Bigl(
|v|_{0;Q_T^+}^{L_\omega^\gamma}
+|F|_{\alpha;Q_T^+}^{L_\omega^\gamma}
+|F|_{(\alpha,\alpha/2);\Sigma_T}^{L_\omega^\gamma}
+|G|_{1+\alpha;Q_T^+}^{L_\omega^\gamma(\elltwo)}
\Bigr).
\label{eq:flat-local-estimate}
\end{align}
We now estimate the perturbative terms. Since the coefficients are $C_x^\alpha$ and $C_x^{1+\alpha}$, respectively, and $\chi$ is supported in $B_{2r}(0)$, the oscillation terms satisfy
\begin{align*}
&\bigl|\bigl(\widetilde a^{ij}(t,\cdot)-\widetilde a^{ij}(t,0)\bigr)D_{ij}v\bigr|_{\alpha;Q_T^+}^{L_\omega^\gamma}
+\bigl|\bigl(\widetilde\sigma^{ik}(t,\cdot)-\widetilde\sigma^{ik}(t,0)\bigr)D_iv\bigr|_{1+\alpha;Q_T^+}^{L_\omega^\gamma(\elltwo)} \\
&\qquad \le C r^\alpha |v|_{(2+\alpha,\alpha/2);Q_T^+}^{L_\omega^\gamma}
+ C_r |v|_{1+\alpha;Q_T^+}^{L_\omega^\gamma}.
\end{align*}
By the standard interpolation inequality,
\[
|v|_{1+\alpha;Q_T^+}^{L_\omega^\gamma}
\le \varepsilon |v|_{(2+\alpha,\alpha/2);Q_T^+}^{L_\omega^\gamma}
+C_\varepsilon |v|_{0;Q_T^+}^{L_\omega^\gamma},
\]
so the oscillation terms can be absorbed into the left-hand side of \eqref{eq:flat-local-estimate} after choosing $r>0$ and then $\varepsilon>0$ sufficiently small.

All the remaining terms in $F$ and $G$ involve at most first-order derivatives of $\widetilde u$ multiplied by derivatives of $\chi$, or the boundary trace of $\widetilde f$ extended constantly in the normal direction. Hence, by the same product estimates and interpolation argument as in the localization step of~\cite[Section~5]{DuLiu2019}, they are bounded by
\[
C_r\Bigl(
|\widetilde u|_{0;Q_T^+}^{L_\omega^\gamma}
+|\widetilde f|_{\alpha;Q_T^+}^{L_\omega^\gamma}
+|\widetilde f|_{(\alpha,\alpha/2);\Sigma_T}^{L_\omega^\gamma}
+|\widetilde g|_{1+\alpha;Q_T^+}^{L_\omega^\gamma(\elltwo)}
\Bigr).
\]
Combining these bounds with \eqref{eq:flat-local-estimate}, and choosing $r>0$ sufficiently small so that the coefficient of $|v|_{(2+\alpha,\alpha/2);Q_T^+}^{L_\omega^\gamma}$ on the right-hand side can be absorbed into the left-hand side, we obtain
\begin{align}
|v|_{(2+\alpha,\alpha/2);Q_T^+}^{L_\omega^\gamma}
\le C\Bigl(
|\widetilde u|_{0;Q_T^+}^{L_\omega^\gamma}
+|\widetilde f|_{\alpha;Q_T^+}^{L_\omega^\gamma}
+|\widetilde f|_{(\alpha,\alpha/2);\Sigma_T}^{L_\omega^\gamma}
+|\widetilde g|_{1+\alpha;Q_T^+}^{L_\omega^\gamma(\elltwo)}
\Bigr).
\label{eq:flattened-local-estimate}
\end{align}
Since $\chi\equiv1$ on $B_r(0)$, this yields a local boundary estimate in the original variables. We record it for later use.

\begin{lemma}\label{lem:local-boundary}
There exist $r_0>0$ and $C>0$, depending only on
\[
n,\ \alpha,\ \gamma,\ \kappa,\ K,\ L,\ L_\alpha,\ T,
\ \text{and the }C^{2+\alpha}\text{-character of }G,
\]
such that for every $z\in\partial G$ and every quasi-classical solution $u$ of \eqref{eq:main}--\eqref{eq:boundary-initial},
\begin{align}
|u|_{(2+\alpha,\alpha/2);(B_{r_0}(z)\cap G)\times(0,T]}^{L_\omega^\gamma}
\le C\Bigl(
|u|_{0;(B_{2r_0}(z)\cap G)\times(0,T]}^{L_\omega^\gamma}
+|f|_{\alpha;(B_{2r_0}(z)\cap G)\times(0,T]}^{L_\omega^\gamma}
\notag\\
\qquad
+|f|_{(\alpha,\alpha/2);\Gamma_T\cap(B_{2r_0}(z)\times[0,T])}^{L_\omega^\gamma}
+|g|_{1+\alpha;(B_{2r_0}(z)\cap G)\times(0,T]}^{L_\omega^\gamma(\elltwo)}
\Bigr).
\label{eq:local-boundary-estimate-new}
\end{align}
\end{lemma}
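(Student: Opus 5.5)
The plan is to read Lemma~\ref{lem:local-boundary} off from the flattened estimate \eqref{eq:flattened-local-estimate} by undoing the change of variables. Most of the analytic work is already contained in that estimate; what remains is to track constants and domains carefully.

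\emph{Choice of scales.} First, I fix $r_0$ uniformly in $z$. By the definition of a $C^{2+\alpha}$ domain, the radii $r_z$ and the $C^{2+\alpha}$-norms of $\Psi_z,\Phi_z$ can be chosen uniformly over $z\in\partial G$, with bounds depending only on the $C^{2+\alpha}$-character of $G$. The transformed coefficients $\widetilde a,\widetilde b,\widetilde c,\widetilde\sigma,\widetilde\nu$ then satisfy Assumptions~\ref{ass:parabolicity}--\ref{ass:coefficients} with constants controlled by $\kappa,K,L,L_\alpha$ and this character. In particular, their boundary traces are $C^{\alpha,\alpha/2}$ on $\{y_1=0\}$, because the traces of $a,b$ on $\Gamma_T$ are, and $D\Psi_z$ does not depend on $t$. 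The small radius $r$ in the absorption argument leading to \eqref{eq:flattened-local-estimate} depends only on these quantities, and hence so does $r_0$. I choose $r_0$ so that $\Psi_z(B_{r_0}(z))\subset B_r(0)$, where $\chi\equiv1$, and so that $\Phi_z(B_{2r}(0))\subset B_{2r_0}(z)$. Both inclusions follow from bi-Lipschitz bounds on $\Psi_z$, after shrinking $r$ if necessary.

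\emph{Transferring the norms.} On $(B_{r_0}(z)\cap G)\times(0,T]$ we have $u(t,x)=v(t,\Psi_z(x))$. Composition with a $C^{2+\alpha}$ diffeomorphism that is independent of $t$ therefore bounds $|u|_{(2+\alpha,\alpha/2)}$ there by $C|v|_{(2+\alpha,\alpha/2);Q_T^+}$. In the other direction, the right-hand side of \eqref{eq:flattened-local-estimate} only involves $\widetilde u,\widetilde f,\widetilde g$ on $\operatorname{supp}\chi\subset B_{2r}(0)$, which maps into $B_{2r_0}(z)$. The flattened data are built from $f,g$ by composition, multiplication by Jacobian factors, and the coefficient extension. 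Consequently
\begin{itemize}
\item $|\widetilde f|_\alpha\le C|f|_{\alpha;(B_{2r_0}(z)\cap G)\times(0,T]}$;
\item $|\widetilde g|_{1+\alpha}\le C|g|_{1+\alpha}$ on the same set;
\item the boundary trace satisfies $|\widetilde f|_{(\alpha,\alpha/2);\Sigma_T}\le C|f|_{(\alpha,\alpha/2);\Gamma_T\cap(B_{2r_0}(z)\times[0,T])}$, because the diffeomorphism maps boundary to boundary and does not act in time;
\item $|\widetilde u|_0\le |u|_0$ on the corresponding set.
\end{itemize}
Combining these bounds gives \eqref{eq:local-boundary-estimate-new}.

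\emph{Main obstacle.} The delicate point is justifying that the right-hand side of \eqref{eq:flattened-local-estimate} really involves only the local $|\widetilde u|_0$, with no higher norm of $\widetilde u$ outside the support of $v$. The commutator terms $2\widetilde a^{ij}D_i\chi D_j\widetilde u$ and $\widetilde\sigma^{ik}D_i\chi\,\widetilde u$ carry derivatives of $\widetilde u$ on $B_{2r}\setminus B_r$, where $v\ne\widetilde u$. Here I would follow the standard device of \cite[Section~5]{DuLiu2019}:
\begin{enumerate}
\item use a family of cut-offs on nested balls $B_{\rho}\subset B_{\rho'}$, with $r\le\rho<\rho'\le 2r$;
\item bound these first-order terms by $C(\rho'-\rho)^{-2}$ times the $C^{1+\alpha}$ norms of $\widetilde u$ on the larger ball;
\item interpolate those norms between the $C^{2+\alpha}$ seminorm and $|\widetilde u|_0$;
\item absorb via the usual iteration lemma in $\rho$.
\end{enumerate}
This yields the estimate with only $|u|_0$ on $B_{2r_0}(z)$ and the stated constant dependence, including on $T$ through Lemma~\ref{lem:mixed-norm-half-space}.
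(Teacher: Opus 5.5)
Your proposal takes the same route as the paper: the lemma is just \eqref{eq:flattened-local-estimate} pulled back through $\Phi_z$, and your nested cut-off and iteration scheme is a legitimate fleshing-out of the commutator step that the paper only delegates to the localization argument of \cite{DuLiu2019}. One detail needs fixing: the two inclusions $\Psi_z(B_{r_0}(z))\subset B_r(0)$ and $\Phi_z(B_{2r}(0))\subset B_{2r_0}(z)$ are incompatible in general (already for linear $\Psi_z$ they force $\|D\Psi_z\|\,\|D\Psi_z^{-1}\|\le 1$, and shrinking $r$ does not change this), so normalize $D\Psi_z(z)$ to be orthogonal and take the cut-off's support ratio slightly below $2$, which your iteration over radii $\rho<\rho'$ readily accommodates.
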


For interior points there is nothing new. If $B_{2r}(x_0)\Subset G$, then the interior estimate in~\cite[Section~4]{DuLiu2019}, together with the localization argument in~\cite[Section~5]{DuLiu2019}, yields
\begin{align}
|u|_{(2+\alpha,\alpha/2);B_r(x_0)\times(0,T]}^{L_\omega^\gamma}
\le C\Bigl(
|u|_{0;B_{2r}(x_0)\times(0,T]}^{L_\omega^\gamma}
+|f|_{\alpha;B_{2r}(x_0)\times(0,T]}^{L_\omega^\gamma}
+|g|_{1+\alpha;B_{2r}(x_0)\times(0,T]}^{L_\omega^\gamma(\elltwo)}
\Bigr),
\label{eq:local-interior-estimate-new}
\end{align}
where $C$ depends only on the same structural quantities.

We now globalize. The local boundary estimate (Lemma~\ref{lem:local-boundary}) and the interior estimate \eqref{eq:local-interior-estimate-new} hold on each coordinate patch with constants depending only on the $C^{2+\alpha}$-character of $G$. Covering $\bar G$ by a locally finite family of such patches and taking the supremum over all patches, we obtain
\begin{align}
|u|_{(2+\alpha,\alpha/2);Q_T}^{L_\omega^\gamma}
\le C\Bigl(
|u|_{0;Q_T}^{L_\omega^\gamma}
+|f|_{\alpha;Q_T}^{L_\omega^\gamma}
+|f|_{(\alpha,\alpha/2);\Gamma_T}^{L_\omega^\gamma}
+|g|_{1+\alpha;Q_T}^{L_\omega^\gamma(\elltwo)}
\Bigr).
\label{eq:global-a-priori-before-gronwall}
\end{align}
Finally, the $L_\omega^\gamma$-supremum norm of $u$ is estimated exactly as in the last part of the proof of the whole-space Schauder estimate in~\cite[Section~5, proof of Theorem~1.3]{DuLiu2019}. This gives
\[
|u|_{0;Q_T}^{L_\omega^\gamma}
\le Ce^{CT}\Bigl(
|f|_{\alpha;Q_T}^{L_\omega^\gamma}
+|f|_{(\alpha,\alpha/2);\Gamma_T}^{L_\omega^\gamma}
+|g|_{1+\alpha;Q_T}^{L_\omega^\gamma(\elltwo)}
\Bigr),
\]
which, together with \eqref{eq:global-a-priori-before-gronwall}, yields \eqref{eq:schauder-estimate}. The proof of Theorem~\ref{thm:schauder} is complete.

\section{Proof of Theorem~\ref{thm:solvability}}\label{sec:solvability}

By Theorem~\ref{thm:schauder}, any quasi-classical solution of \eqref{eq:main}--\eqref{eq:boundary-initial}
satisfies the a priori estimate
\begin{equation}\label{eq:section5-apriori}
	|u|_{(2+\alpha,\alpha/2);Q_T}^{L_\omega^\gamma}
	\le
	C e^{CT}
	\Bigl(
	|f|_{\alpha;Q_T}^{L_\omega^\gamma}
	+
	|f|_{(\alpha,\alpha/2);\Gamma_T}^{L_\omega^\gamma}
	+
	|g|_{1+\alpha;Q_T}^{L_\omega^\gamma(\elltwo)}
	\Bigr).
\end{equation}
In particular, uniqueness follows immediately by applying \eqref{eq:section5-apriori} to the
difference of two solutions. It remains to prove existence.

For $s\in[0,1]$, consider the family of equations
\begin{equation}\label{eq:continuity-family}
	du=(L_su+f)\md t+(\Lambda_s^ku+g^k)\md w_t^k
	\quad\text{in }Q_T,
\end{equation}
with the boundary and initial conditions
\[
u=0 \quad\text{on }\Gamma_T,
\qquad
u(0,\cdot)=0 \quad\text{in }G,
\]
where
\[
L_s:=s\bigl(a^{ij}D_{ij}+b^iD_i+c\bigr)+(1-s)\Delta,
\qquad
\Lambda_s^k:=s\bigl(\sigma^{ik}D_i+\nu^k\bigr).
\]
Since Assumptions~\ref{ass:compatibility}--\ref{ass:data} are stable under this interpolation, the estimate
\eqref{eq:section5-apriori} holds for \eqref{eq:continuity-family} with a constant independent of
$s\in[0,1]$.

It therefore suffices to verify solvability for one value of $s$, and then apply the standard
method of continuity. We choose $s=0$, for which \eqref{eq:continuity-family} becomes
\begin{equation}\label{eq:base-equation}
	du=(\Delta u+f)\md t+g^k\md w_t^k
	\quad\text{in }Q_T,
	\qquad
	u=0 \text{ on }\Gamma_T,
	\qquad
	u(0,\cdot)=0 \text{ in }G.
\end{equation}

We briefly indicate the solvability of \eqref{eq:base-equation}. Let $(f_m,g_m)$ be smooth
approximations of $(f,g)$ such that
\[
f_m\to f
\quad\text{in } C_x^\alpha(\bar Q_T;L_\omega^\gamma),
\qquad
f_m|_{\Gamma_T}\to f|_{\Gamma_T}
\quad\text{in } C_{x,t}^{\alpha,\alpha/2}(\Gamma_T;L_\omega^\gamma),
\]
and
\[
g_m\to g
\quad\text{in } C_x^{1+\alpha}(\bar Q_T;L_\omega^\gamma(\elltwo)),
\qquad
g_m=0 \quad\text{on }\Gamma_T.
\]
The approximations are chosen within the closed subspace satisfying the boundary and corner compatibility conditions, so that $f_m(0,\cdot)=0$ on $\partial G$. The continuity family does not change $f$, hence the condition $f(0,\cdot)=0$ on $\partial G$ is preserved along the continuity path.

For each $m$, the problem
\begin{equation}\label{eq:base-equation-m}
	du_m=(\Delta u_m+f_m)\md t+g_m^k\md w_t^k
	\quad\text{in }Q_T,
	\qquad
	u_m=0 \text{ on }\Gamma_T,
	\qquad
	u_m(0,\cdot)=0 \text{ in }G
\end{equation}
admits a quasi-classical solution. Indeed, this follows from the classical solvability of the
Dirichlet heat equation together with the standard stochastic convolution representation for the
additive noise term. Applying \eqref{eq:section5-apriori} with $s=0$ to $u_m-u_\ell$, we obtain
\[
|u_m-u_\ell|_{(2+\alpha,\alpha/2);Q_T}^{L_\omega^\gamma}
\le
C
\Bigl(
|f_m-f_\ell|_{\alpha;Q_T}^{L_\omega^\gamma}
+
|f_m-f_\ell|_{(\alpha,\alpha/2);\Gamma_T}^{L_\omega^\gamma}
+
|g_m-g_\ell|_{1+\alpha;Q_T}^{L_\omega^\gamma(\elltwo)}
\Bigr).
\]
Hence $\{u_m\}$ is Cauchy in $C_{x,t}^{2+\alpha,\alpha/2}(Q_T;L_\omega^\gamma)$, and its limit
provides a quasi-classical solution to~\eqref{eq:base-equation}.

We may now conclude by the standard continuity argument; compare with \cite[Section 2.2]{DuLiu2019}.
More precisely, let
\[
X_0:=\bigl\{u\in C_{x,t}^{2+\alpha,\alpha/2}(Q_T;L_\omega^\gamma):
u=0\text{ on }\Gamma_T,\ u(0,\cdot)=0\bigr\},
\]
equipped with the norm
\[
\|u\|_{X_0}:=|u|_{(2+\alpha,\alpha/2);Q_T}^{L_\omega^\gamma}.
\]
If \eqref{eq:continuity-family} is solvable for some $s_0\in[0,1]$, then for $v\in X_0$ we define
$u=\mathcal R_{s,s_0}v$ to be the solution of
\[
du=
\bigl(L_{s_0}u+(L_s-L_{s_0})v+f\bigr)\md t
+
\bigl(\Lambda_{s_0}^ku+(\Lambda_s^k-\Lambda_{s_0}^k)v+g^k\bigr)\md w_t^k,
\]
with homogeneous boundary and initial conditions. Note that $u$ belongs to $X_0$ by construction.
By \eqref{eq:section5-apriori},
\[
\|\mathcal R_{s,s_0}v_1-\mathcal R_{s,s_0}v_2\|_{X_0}
\le
C|s-s_0|\,\|v_1-v_2\|_{X_0}.
\]
Hence $\mathcal R_{s,s_0}$ is a contraction on $X_0$ whenever $|s-s_0|$ is sufficiently small, with the
smallness threshold depending only on the constant in \eqref{eq:section5-apriori}. Since
\eqref{eq:base-equation} is solvable, the set of $s\in[0,1]$ for which \eqref{eq:continuity-family}
is solvable is nonempty, open, and closed. Therefore it is all of $[0,1]$, and in particular the
original problem \eqref{eq:main}--\eqref{eq:boundary-initial} admits a unique quasi-classical
solution in $X_0$. The proof of Theorem~\ref{thm:solvability} is complete.

\section{Proof of Theorem~\ref{thm:classical}}\label{sec:classical}

Fix $0<\beta<\alpha$ and choose an intermediate exponent $\theta\in(\beta,\alpha)$. Select $\gamma\ge2$ so large that
\begin{equation}\label{eq:kolmogorov-gamma}
\gamma>\frac{n+2}{\theta-\beta}.
\end{equation}
By the assumptions of Theorem~\ref{thm:classical}, which require pathwise H\"older regularity in the class $\mathcal C_{l.b.}^{\alpha-}$, there exist stopping times $\tau_m\uparrow T$ a.s. such that on each random interval $[0,\tau_m]$ the pathwise H\"older norms of the coefficients and free terms of order $\theta$ are bounded by a deterministic constant depending only on $m$ and $\theta$. Replacing each coefficient and free term $h(t,x)$ by the stopped field
\[
h^{(m)}(t,x):=h(t\wedge \tau_m,x),
\]
we obtain coefficients and free terms on $Q_T$ satisfying Assumptions~\ref{ass:compatibility}--\ref{ass:data} with exponent $\theta$, where the corresponding constants depend only on $m$ and $\theta$.

Let $u^{(m)}$ be the unique quasi-classical solution of the stopped Dirichlet problem furnished by Theorem~\ref{thm:solvability} with exponent $\theta$. Since the stopped coefficients and free terms for $m+1$ agree with those for $m$ on $[0,\tau_m]$, uniqueness in Theorem~\ref{thm:solvability} implies that
\[
u^{(m+1)}=u^{(m)} \qquad\text{on }G\times[0,\tau_m]\quad\text{a.s.}
\]
Hence the family $\{u^{(m)}\}$ is consistent and defines a predictable random field $u$ on $Q_T$ by patching along the stopping times $\tau_m$.

For each fixed $m$, Theorem~\ref{thm:solvability} with exponent $\theta$ yields
\[
u^{(m)}\in C_{x,t}^{2+\theta,\theta/2}(Q_T;L_\omega^\gamma).
\]
Applying the anisotropic Kolmogorov continuity theorem, in the form used after~\cite[Theorem~1.1]{DuLiu2019}, together with \eqref{eq:kolmogorov-gamma}, we infer that each $u^{(m)}$ admits a modification, still denoted by $u^{(m)}$, such that for almost every $\omega\in\Omega$,
\[
u^{(m)}(\cdot,\cdot,\omega)\in C_{x,t}^{2+\beta,\beta/2}(G\times[0,\tau_m(\omega)]).
\]
Since the solutions $u^{(m)}$ agree on overlaps, these modifications patch together to a predictable random field $u$ on $Q_T$ satisfying
\[
u(\cdot,\cdot,\omega)\in C_{x,t}^{2+\beta,\beta/2}(G\times[0,\tau_m(\omega)])
\qquad\text{a.s.\ for each }m.
\]

To obtain the locally bounded pathwise class, we introduce an additional layer of stopping times that cap the pathwise H\"older norm. For each $m$ and $N\in\mathbb N$, define
\[
\rho_N^{(m)}:=
\inf\Bigl\{\,t\le\tau_m:
|u(\omega)|_{(2+\beta,\beta/2);G\times[0,t]}>N
\Bigr\}\wedge\tau_m.
\]
Since the pathwise norm is finite almost surely on $G\times[0,\tau_m(\omega)]$, we have $\rho_N^{(m)}\uparrow\tau_m$ a.s.\ as $N\to\infty$, and on $G\times[0,\rho_N^{(m)}(\omega)]$ the pathwise H\"older norm is bounded by $N$ by construction. 

Re-indexing the double sequence $\{\rho_N^{(m)}\}_{m,N\ge1}$ into a single sequence of stopping times $\widetilde\tau_j\uparrow T$ a.s., we obtain
\[
\operatorname*{ess\,sup}_{\omega\in\Omega}|u(\omega)|_{(2+\beta,\beta/2);G\times[0,\widetilde\tau_j(\omega)]}\le C_j<\infty
\]
for each $j$. Hence $u$ belongs locally boundedly to the pathwise H\"older class $C_{x,t}^{2+\beta,\beta/2}$. Since $\beta\in(0,\alpha)$ was arbitrary, we conclude that
\[u\in \mathcal C_{l.b.}^{2+\alpha-,\alpha/2-}(Q_T).
\]
The uniqueness follows from Theorem~\ref{thm:solvability} applied on each stopped interval $[0,\tau_m]$. This completes the proof of Theorem~\ref{thm:classical}.

\section*{Acknowledgments}

The author was supported by the National Natural Science Foundation of China (42450269, 12222103), and LMNS at Fudan University. 
The author also thanks Shanghai Institute for Mathematics and Interdisciplinary Sciences (SIMIS), China for their financial support. 
This research was partly funded by SIMIS, China under grant number SIMIS-ID-2024-WE.

\bibliographystyle{amsplain}
\bibliography{dirichletspde_refs}

\end{document}